\newtheorem{satz}{Theorem}
\newtheorem{proposition}[satz]{Proposition}
\newtheorem{theorem}[satz]{Theorem}
\newtheorem{lemma}[satz]{Lemma}
\newtheorem{corollary}[satz]{Corollary}
\newtheorem{remark}[satz]{Remark}
\def\T{\mathsf{T}}
\def\F{\mathbb {F}}
\def\E{\mathsf{E}}
\def\d{\delta}
\def\o{\omega}
\def\({\big (}
\def\){\big )}
\def\G{\Gamma}
\def\le{\leqslant}
\def\ge{\geqslant}
\def\_phi{\varphi}
\def\eps{\varepsilon}
\def\Gr{{\mathbf G}}
\def\f{{\mathbb F}}
\def\R{\mathbb {R}}
\def\Q{\mathsf{Q}}
\author{Shkredov I.D.}
\title{ Any small multiplicative sugroup is not a sumset
\footnote{
This work was supported by grant
Russian Scientific Foundation RSF 14--11--00433.}}
\date{}
\begin{document}
\maketitle

\begin{center}
 Annotation.
\end{center}

{\it \small
    We prove that for an arbitrary $\eps>0$  and any 
	multiplicative subgroup $\G \subseteq \F_p$, $1\ll |\G| \le p^{2/3 -\eps}$ 
	there are no sets $B$, $C \subseteq \F_p$ with $|B|, |C|>1$ such that  $\G=B+C$. 	
	Also, we obtain that
	for $1\ll |\G|  \le p^{6/7-\eps}$ and any $\xi\neq 0$ there is no a set $B$ such that  $\xi \G+1=B/B$. 	
}
\\

\section{Introduction}
\label{sec:introduction}

Let $\Gr= (\Gr,+)$ be an abelian group and $B,C \subseteq \Gr$ be two sets. 
In  Additive Combinatorics sets of the form $B+C := \{ b+c~:~b\in B,\, c\in C\}$ are called the {\it sumsets} and studying properties of such sets is a central
problem  
of this field, see \cite{TV}. 
A set $A$ of integers is {\it additively reducible} if $A$ cannot be written as a set of sums $B+C$ unless one of the sets consists of a single element, see \cite{Ostmann}, \cite{Sarkozy_residues}. 
The question of additive irreducibility of integer sequences was posed by Ostmann \cite{Ostmann} back in '56 (see also a modern overview \cite{Elsholtz}). 
Basically, Ostmann interested in reducibility of  classical sets of Number Theory such as the primes numbers, shifted primes and so on. 
S\'ark\"ozy \cite{Sarkozy_residues} (see also \cite{DS_AD}, \cite{GMS} and references therein) extended such problems to the finite field setting, perhaps for the first time suggesting that 
`multiplicatively structured' sets should be additively irreducible.
As a special case of his program, S\'ark\"ozy conjectured that another classical object of Number Theory, namely, multiplicative subgroups of finite fields of prime order \cite{KS1} are additively irreducible, in particular, the set of quadratic residues modulo a prime. 
Despite some progress (see, e.g., \cite{Sh_Sarkozy}, \cite{Shparlinski_AD} and references therein), the conjecture of S\'ark\"ozy remains open and is considered as an important 
question 
in the field.
The connected problems were 
covered 
in  \cite{EH}, \cite{ES}, \cite{GH}, \cite{GK}, \cite{LS}, \cite{Sarkozy_residues_shift} and in many other articles.

In papers \cite{s_dss}, \cite{s_diff}, \cite{SZ} it was realized that this type of questions is connected with so--called  the {\it sum--product phenomenon}, see \cite{ESz}, \cite{TV}, \cite{Bourgain_more}, 
and the corresponding
problem 
in the real setting was completely solved in \cite{SZ}, where it was proved, in particular, 

\begin{theorem}
	There is $\eps>0$ such that for all sufficiently large finite $A\subset \R$ with $|AA| \le |A|^{1+\eps}$ there is no decomposition $A=B+C$ with $|B|, |C| >1$. 
\label{t:SZ}
\end{theorem}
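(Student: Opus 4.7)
The plan is a proof by contradiction combining the strong additive rigidity forced by $A = B + C$ with the sum-product phenomenon forced by $|AA| \le |A|^{1+\eps}$. Suppose $A = B + C$ with $|B|, |C| \ge 2$ and $|AA| \le |A|^{1+\eps}$; without loss of generality $2 \le |B| \le |C|$, so $|C| \ge |A|^{1/2}$ and $|B| \ge |A|/|C|$.

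First, the identity $|B+C| = |A|$ is a Pl\"unnecke hypothesis for the pair $(B,C)$ with constants $|A|/|B|$ and $|A|/|C|$. Applying Pl\"unnecke--Ruzsa (Petridis form) with each of $B$, $C$ as pivot, one controls every iterated sumset $|kB \pm lB|$ and $|kC \pm lC|$ polynomially in these constants. Via Ruzsa's triangle inequality these bounds propagate to an upper bound of the shape $|A+A| = |2B+2C| \le |A|^{\alpha}/|C|^{\beta}$ for explicit exponents $\alpha, \beta$, and to a companion upper bound on the additive energy $E^+(A)$.

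Second, the hypothesis $|AA| \le |A|^{1+\eps}$ combined with a sum-product inequality (Solymosi's $|A+A|^{2}|AA| \gtrsim |A|^{4}/\log|A|$, or the sharper Konyagin--Shkredov bounds involving higher energies) yields reverse-direction inequalities $|A+A| \gtrsim |A|^{3/2 - O(\eps)}/\mathrm{polylog}|A|$ and, more strongly, $E^+(A) \ll |A|^{3 - c(\eps)}$ for some $c(\eps) > 0$.

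In the \emph{unbalanced} regime, where $|C|$ is close to $|A|$, the two bounds contradict each other once $\eps$ is chosen small, and we are done. The main obstacle I anticipate is the \emph{balanced} regime $|B| \sim |C| \sim |A|^{1/2}$, where the Pl\"unnecke upper bound on $|A+A|$ degrades and no longer beats the sum-product lower bound. Here I would exploit that $A$ contains many translates $b + C$ with $b \in B$, which yields a lower bound on the additive energy of the shape $E^+(A) \gg |B-B|\cdot |C|^{2}$; combined with $E^+(A) \ll |A|^{3-c(\eps)}$ from the sum-product side, this still yields the required contradiction after optimization in $(|B|,|C|)$, provided $c(\eps)$ stays bounded away from $0$. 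A cleaner alternative would be an Elekes--R\'onyai-type rigidity for the polynomial $(b+c)(b'+c')$ on the grid $(B\times C)^{2}$: smallness of its image would force $B$ or $C$ to be essentially trivial, contradicting $|B|, |C| > 1$.
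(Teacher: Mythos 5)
There is a genuine gap, and it sits exactly where you anticipated: the balanced regime $|B|\sim|C|\sim|A|^{1/2}$. Your escape route there is the energy lower bound $\E^{+}(A)\gg|B-B|\,|C|^{2}$ played against an upper bound $\E^{+}(A)\ll|A|^{3-c(\eps)}$, but this is quantitatively void. The lower bound is correct (each $d\in B-B$ has $r_{A-A}(d)\ge|C|$), yet in the critical case $|B|\,|C|\approx|A|$ it gives at most $|B|^{2}|C|^{2}\approx|A|^{2}$, which is no better than the trivial bound $\E^{+}(A)\ge|A|^{2}$ valid for \emph{every} set; on the other side, no upper bound on $\E^{+}(A)$ can dip below $|A|^{2}$, and the best available bounds for multiplicatively rich sets are of the shape $\E^{+}(A)\ll|A|^{32/13+O(\eps)}$ (the real analogue of Theorem \ref{t:32/13}; geometric progressions show one cannot hope for less than $|A|^{2}\log|A|$). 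The two estimates are separated by a factor of roughly $|A|^{0.46}$ and never meet. The unbalanced regime is not closed either: from $|B+C|=|A|$ with, say, $|B|=2$ and $|C|=|A|/2$, Pl\"unnecke--Ruzsa with pivot $B$ only yields $|C+C|\le(|A|/|B|)^{2}|B|$, which is trivial, so no useful upper bound on $|A+A|=|B+B+C+C|$ follows. This is precisely the ``random sumset'' obstruction described in the introduction: if $B,C$ are random of size $|A|^{1/2}$, then $B+C$ has essentially minimal doubling and energy, so no first- or second-moment additive statistic of $A$ alone can detect the decomposition. The Elekes--R\'onyai alternative you mention is not developed enough to assess.

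What the proof actually exploits is a third-moment incidence statistic. Writing $A=B+C=B-(-C)$, every ratio of differences $\frac{b-c'}{b''-c'}$ with $b,b''\in B$, $c'\in -C$ lies in $(B+C)/(B+C)=A/A$, which is small because $|AA|\le|A|^{1+\eps}$; i.e.\ the sets $\T[\,\cdot\,]$ of Proposition \ref{p:main} land in a multiplicatively structured set. Cauchy--Schwarz then gives $(|B|^{2}|C|^{2})^{2}\le\sigma\cdot\Q$, where $\Q\ll(|B||C|)^{5/2}\log^{2}+\dots$ is the collinear-quadruple bound (Szemer\'edi--Trotter over $\R$, Lemma \ref{l:Q(A,B,C,D)}) and the support size $\sigma$ is controlled by the number of solutions of $\frac{1-\g_{1}}{1-\g_{2}}=\g_{3}$ inside the multiplicative structure, which after a change of variables is an additive energy divided by the size of the set. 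Feeding in $|B||C|\approx|A|$ forces this energy to be $\gg|A|^{5/2-o(1)}$, contradicting the $|A|^{32/13+O(\eps)}$ upper bound. The decisive comparison is thus $5/2$ versus $32/13$, and it is reached only through the collinear-quadruple machinery; neither Pl\"unnecke--Ruzsa nor any direct energy count of $A$ gets above exponent $2$ from below.
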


Here, of course, $AA=\{ a_1 a_2 ~:~ a_1,\, a_2\in A\}$ is the {\it product} set.

Let us formulate the 
main result of our paper.

\begin{theorem}
	Let $p$ be a prime number.
	Let $\eps>0$ be an arbitrary number, and $\G$ be a sufficiently large multiplicative subgroup,  $|\G| \ge C(\eps)$, $|\G| \le p^{2/3-\eps}$.
	Then for any $A,B\subseteq \F_p$, $|A|, |B| >1$ one has $\G \neq A+B$. 
\label{t:G_intr}
\end{theorem}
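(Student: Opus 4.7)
The proof will proceed by contradiction. Suppose $\G = A+B$ with $|A|, |B| \ge 2$, and WLOG $|A| \le |B|$; then $|A||B| \ge |\G|$ gives $|A| \le |\G|^{1/2}$. Fixing $b_0 \in B$ and $a_0 \in A$ yields the key structural observation $A \sbeq \G - b_0$ and $B \sbeq \G - a_0$: both $A$ and $B$ sit inside shifted copies of the small multiplicative subgroup $\G$. The multiplicative invariance $\xi \G = \G$ for $\xi \in \G$ further gives $\xi A + \xi B = A + B$ for every $\xi \in \G$, a rigid family of dilated representations of the same sumset.

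The plan is to combine two chains of inequalities for $|A||B|$ and show them incompatible with $|A||B| \ge |\G|$. The first is additive-combinatorial: from $\G + \G = (A+A) + (B+B)$ and Pl\"unnecke--Ruzsa (applied with the doubling ratios $|\G|/|A|$ and $|\G|/|B|$) one obtains $|\G+\G| \ls |\G|^4/(|A||B|)$, which combined with sum-product lower bounds $|\G+\G| \gg |\G|^{1+\sigma_1}$ for small subgroups (Heath-Brown--Konyagin, Bourgain--Glibichuk--Konyagin, Shkredov) gives $|A||B| \ls |\G|^{3-\sigma_1}$. The second, sharper inequality involves the mixed additive energy: Cauchy--Schwarz combined with $\G=A+B$ yields
\[
E^+(A,B) \ge \frac{|A|^2|B|^2}{|\G|},
\]
while $E^+(A,B) \ls \sqrt{E^+(A) E^+(B)}$ together with the Heath-Brown--Konyagin-type estimate $E^+(\G) \ll |\G|^{5/2}$ (transferred to $A, B$ as subsets of $\G$ up to translation) gives a matching upper bound. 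Combining the two chains in the range $|\G| \le p^{2/3-\eps}$ forces a relation of the form $|\G|^{\sigma(\eps)} \ls 1$ with $\sigma(\eps)>0$, contradicting $|\G| \ge C(\eps)$.

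The main obstacle is the transfer of the subgroup energy bound to the (potentially sparse) subsets $A + b_0, a_0 + B \sbeq \G$: a subset of $\G$ could in principle concentrate on an additively structured piece (e.g.\ an arithmetic progression sitting inside $\G$) and thereby inflate its normalized additive energy. This is handled by a dyadic pigeonhole on the representation function $r_{A+B}$ combined with a Balog--Szemer\'edi--Gowers step to extract a structured dense subset, followed by invocation of the $\F_p$ sum-product theorem on that dense piece. The hypothesis $|\G| \le p^{2/3-\eps}$ enters precisely at the last step: the quantitative saving $\sigma(\eps)>0$ in the $\F_p$ sum-product input is only known below this threshold, beyond which multiplicative subgroups can begin to resemble affine subfields and the current toolbox breaks down.
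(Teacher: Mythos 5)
There is a genuine gap, and it sits exactly at the point the paper identifies as the crux (the ``random sumset case''). Your first chain is vacuous: Pl\"unnecke--Ruzsa gives $|\G+\G|\le |\G|^4/(|A||B|)$, so even the strongest conceivable sum--product input $|\G+\G|\gg |\G|^{1+\sigma_1}$ only yields $|A||B|\ll |\G|^{3-\sigma_1}$, which contradicts $|A||B|\ge |\G|$ only if $\sigma_1>2$ --- impossible. Your second chain is where the contradiction must come from, and it does not close. In the critical regime $|A|\sim|B|\sim|\G|^{1/2}$ (which is forced, by Theorem 7 / \cite{Shparlinski_AD}), the lower bound $\E^+(A,B)\ge |A|^2|B|^2/|\G|\sim|\G|$ must be beaten by an upper bound on $\sqrt{\E^+(A)\E^+(B)}$. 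But the subgroup energy bound $\E^+(\G)\ll|\G|^{5/2-\d}$ does \emph{not} transfer to sparse subsets in any useful normalized form: for $A\subseteq \G-b_0$ with $|A|\sim|\G|^{1/2}$ the containment only gives $\E^+(A)\le \E^+(\G)$, which is far weaker than the trivial $\E^+(A)\le|A|^3=|\G|^{3/2}$, and a generic such subset actually has $\E^+(A)\sim|A|^2\sim|\G|$, whence $\sqrt{\E^+(A)\E^+(B)}\sim|\G|$ --- exactly matching your lower bound with no room to spare. Your proposed fix (dyadic pigeonhole plus Balog--Szemer\'edi--Gowers plus sum--product) cannot rescue this: BSG extracts structure from sets with \emph{large} additive energy, whereas here the obstruction is precisely that $A$ and $B$ may have near-minimal energy and look random, so there is no structured dense piece to extract and nothing for the sum--product theorem to act on.

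The paper escapes this trap by working with a higher-moment quantity instead of $\E^+(A,B)$: the number of collinear quadruples $\Q(A,B,A,B)$, bounded via point--line incidences by $(|A||B|)^{5/2}\log^2|A|+|A|^3|B|^2$ (Lemma 6), together with the observation that the support of $q_{A,B,A,B}(x,y)$ is confined to pairs with $x,y\in(1-\G)^{-1}\cup\{0\}$ and $x/y\in\G$, so that the support has size at most about $\E^{+}(\G)/|\G|$ (Proposition 8). The subgroup energy bound $\E^+(\G)\ll|\G|^{5/2-\d}$ is thus applied to $\G$ itself --- where it is a genuinely nontrivial statement --- rather than to the sparse sets $A,B$, and Cauchy--Schwarz on $q$ then gives $|A|^3|B|^3\ll|\G|^{3-2\d+o(1)}$, contradicting $|A||B|\ge|\G|$. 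Replacing your energy chain by some mechanism of this kind (or an equivalent incidence-theoretic one) is not a technical refinement but the essential missing idea.
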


Thus, we completely solved the problem on additive reducibility of multiplicative subgroups of size 
less than $p^{2/3}$.

\bigskip 

In paper \cite{Sarkozy_residues_shift} S\'ark\"ozy studied {\it multiplicative reducibility} of nonzero shifts of multiplicative subgroups $\G$ and proved that it is not possible to multiply arbitrary three sets to obtain such a shift
(actually, S\'ark\"ozy 
had deal with 
a particular case of quadratic residues).
In our article we consider a symmetric situation and prove that $\xi\G+1\neq B/B$ for any $\xi \neq 0$ and an arbitrary set $B$.  




\begin{theorem}
	Let $p$ be a prime number.
	Let $\eps>0$ be an arbitrary number, and $\G$ be a sufficiently large multiplicative subgroup,  $|\G| \ge C(\eps)$, $|\G| \le p^{6/7-\eps}$.
	Then for any $B\subseteq \F_p$, and an arbitrary $\xi \neq 0$ one has $\xi \G+1 \neq B/B$. 
\label{t:G+x_intr}
\end{theorem}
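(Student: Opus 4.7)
My plan is to argue by contradiction in three stages: first, extract a rigid affine structure on $B$ from the hypothesised identity; second, convert a multiplicative-energy lower bound on $B$ into a counting problem inside the subgroup $\G$; and third, close the loop with sum-product estimates for shifted multiplicative subgroups.

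Assuming $\xi\G+1 = B/B$, the key structural observation is obtained by fixing any $b_0 \in B$: since $b/b_0 \in B/B = \xi\G+1$ for every $b \in B$, one has
\[
B \;\subseteq\; \{b_0\} \cup b_0(1 + \xi\G),
\]
so $|B| \le |\G|+1$, while the trivial $|B/B| \le |B|^2$ gives $|B| \ge |\G|^{1/2}$. Thus each $b \in B$ admits a representation $b = b_0(1+\xi g)$ with $g \in \G \cup \{0\}$. The self-inverse symmetry $(B/B)^{-1} = B/B$ additionally forces $\G$ to be stable under the M\"obius involution $g \mapsto -g/(1+\xi g)$; I would keep this in reserve, to be used to boost $|B|$ further should the crude bound $|B|\ge|\G|^{1/2}$ prove insufficient.

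Under the parameterisation $b_i = b_0(1+\xi g_i)$, a multiplicative coincidence $b_1 b_2 = b_3 b_4$ translates into the mixed equation
\[
g_1 + g_2 + \xi g_1 g_2 \;=\; g_3 + g_4 + \xi g_3 g_4
\]
with $g_i \in \G \cup \{0\}$, so the multiplicative energy $E^{\times}(B)$ is exactly the number of solutions of this equation inside $\G$. Cauchy--Schwarz yields the standard lower bound $E^{\times}(B) \ge |B|^4/|B/B| = |B|^4/|\G|$; via the isomorphism $g \mapsto 1+\xi g$, the displayed equation also identifies this count with a higher energy (a $T_k$-moment) of the shifted multiplicative subgroup $\xi\G+1$, which is the quantity one actually knows how to estimate.

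In the final stage I would invoke the sharpest available incidence and sum-product estimates for multiplicative subgroups of $\F_p$, in the spirit of Bourgain--Glibichuk--Konyagin and the $T_k$-moment machinery developed in the author's earlier work, to secure a power-saving upper bound of the form $E^{\times}(B) \le |\G|^{3-c\eps}$ valid in the range $|\G| \le p^{6/7-\eps}$. Combined with the lower bound and with $|B| \ge |\G|^{1/2}$ (possibly sharpened using the M\"obius symmetry), this would contradict $|\G| \ge C(\eps)$. The hardest step will be precisely this last one: the exponent $6/7$ appears to be exactly the threshold at which the currently available higher-energy estimates on shifted multiplicative subgroups cease to deliver a power saving, so pushing the argument through will likely require a careful combination of several such estimates rather than a single clean sum-product inequality.
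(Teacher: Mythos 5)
Your first stage is sound and coincides with what the paper does in its Lemma on sizes: fixing $b_0\in B$ gives $B\subseteq\{b_0\}\cup b_0(1+\xi\G)$, hence $|\G|^{1/2}\le |B|\le |\G|+1$, and in fact the paper proves the two-sided bound $|B|\sim|\G|^{1/2+o(1)}$. But this is precisely what kills your third stage: since $|B|\sim|\G|^{1/2+o(1)}$ is \emph{forced} (the M\"obius symmetry cannot boost it, because the bound is the truth), the Cauchy--Schwarz lower bound $\E^{\times}(B)\ge |B|^4/|B/B|\sim|\G|^{1+o(1)}$ is already matched by the trivial diagonal contribution $|B|^2\sim|\G|$. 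A set of size $|\G|^{1/2}$ inside $b_0(1+\xi\G)$ can have essentially minimal multiplicative energy, so \emph{no} upper bound on $\E^{\times}(B)$ can produce a contradiction here --- and the target $\E^{\times}(B)\le|\G|^{3-c\eps}$ is in any case far weaker than the trivial bound $\E^{\times}(B)\le|B|^3\sim|\G|^{3/2}$. This is exactly the ``random sumset case'' obstruction the paper flags in its introduction: when the factors have square-root size, second-moment (energy) statistics of $B$ carry no usable non-randomness.

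The paper escapes this by measuring a different, genuinely non-random feature. From $B/B=\xi\G+1$ one gets, for $b\neq 0$ and $a,a'\neq b$, that $\frac{a-b}{a'-b}\in\G$, so the ratio set of collinear triples $R=\T[B]$ satisfies $R\subseteq\G\sqcup\{0\}$. The identity $\T[B]=1-\T[B]$ then gives $|R|\le|\G\cap(1-\G)|+O(1)$, which is $\ll|\G|^{2/3}$ for $|\G|<p^{3/4}$ (and $\ll|\G|^2/p$, resp.\ a $k=2$ intersection bound, in the larger ranges) by the Vyugin--Shkredov intersection theorem. Against this stands the incidence-theoretic lower bound $|R|\gg|B|^{8/5}/\log^{28/15}|B|$ from Murphy--Petridis--Roche-Newton--Rudnev--Shkredov. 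With $|B|\gg|\G|^{1/2-o(1)}$ this yields $|\G|^{4/5-o(1)}\ll|\G|^{2/3}$ (and the analogous contradictions up to the threshold $|\G|\le p^{6/7-\eps}$, which is where the third regime's bound $|A|\ll p^{-1}|\G|^{5/3}\log^{1/3}|\G|$ crosses $|\G|^{1/2}$). If you want to salvage your plan, you must replace the energy $\E^{\times}(B)$ by a statistic of this kind --- collinear triples/quadruples of $B\times B$, or equivalently the support of $t_{B,B,B}$ --- for which square-root-sized sets are provably non-generic.
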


The method of the proof develops the approach from 
\cite{s_dss}---\cite{SZ}
combining with  \cite{Petridis_quadruples}, \cite{MPR-NRS}.
As it was realized in \cite{s_dss} that to prove $A\neq B+C$, $|B|, |C|>1$ one must firstly separate our set $A$  from the "random sumset case".
Roughly speaking, it means that the most difficult case in the proof of $A \neq B+C$ 
is when 
$|B| \sim |C| \sim |A|^{1/2}$ and $B,C$ look  like random sets with the corresponding  density.
Further, one can note that if $B$, $C$ are random sets, then $B+C$ is also 
behaves randomly  
more or less, so we must exploit some {\it nonrandom} properties of the sumset $A$.
In \cite{Sh_Sarkozy} the author used the fact that if $A$ is the set of quadratic residues, then $A$ is so--called the {\it perfect difference set}, 
and the approach from \cite{s_diff}, \cite{SZ} is based on 
the {\it small doubling}  
property of $A$, namely, that $|AA| \ll |A|$. 
Of course a random set has such properties with probability zero. 
In this paper we continue to use latter non--random feature of $A$.
More concretely, if $|AA| \ll |A|$ and, simultaneously, $A=B+C$, then   $A$ 
is both additively and multiplicatively  rich and this contradicts with the sum--product phenomenon, see \cite{ESz}, \cite{TV}, \cite{Bourgain_more}.
Unfortunately, at the moment the sum--product phenomenon 
over  the prime finite fields
is weaker  then in the real setting, so we could not prove an analog of Theorem \ref{t:SZ} in $\F_p$ just copying the arguments of the proof of  Theorem 
\ref{t:SZ} (see the discussion section from article \cite{SZ}).
In our current approach we use some results from \cite{Petridis_quadruples}, \cite{MPR-NRS} which are comparable with the correspondent theorems  for the reals but on the other hand it requires to change the scheme of the prove from \cite{SZ} somehow. 
So, we apply weaker incidence bounds than were used in \cite{SZ} and, in particular, we reprove the results of this paper. 
Interestingly, that both methods of paper \cite{SZ} and the current one use a nontrivial upper bound for the additive energy of multiplicative subgroups and multiplicatively rich sets, see Theorem \ref{t:32/13} below.

The author is grateful to D. Zhelezov for useful discussions.

\section{Definitions and preliminaries}
\label{sec:preliminaries}

	The following notation is used throughout the paper. 
	Let $p$ be a prime number. 
	By $\F_p$ we denote the prime field. 
	The expressions $X \gg Y$, $Y \ll X$, $Y = O(X)$, $X = \Omega(Y)$ all have the same meaning that there is an absolute constant $c > 0$ such that $|Y| \leq c|X|$. 
	All logarithms are to base $2$.

	For sets  $A$ and $B$ from $\F_p$ the {\it sumset} $A + B$ is the set of all pairwise sums  
	$$
		A+B=\{ a + b ~:~ a \in A,\, b\in B \} \,,  
      $$
	and similarly $AB$, $A-B$ denotes the set of {\it products} and {\it differences}, respectively.
	We denote by $|A|$ the cardinality of a set $A$. 
	The {\it additive energy} $\E^+ (A)$, see \cite{TV},  denotes the number of additive  quadruples $(a_1, a_2, a_3, a_4)$ such that $a_1 + a_2 = a_3 + a_4$. 	
	We use representation function notations like $r_{AB} (x)$ or $r_{A+B} (x)$, which counts the number of ways $x \in \F_p$ can be expressed as a product $ab$ or a sum $a+b$ with $a\in A$, $b\in B$, correspondingly. 
	For example, $|A| = r_{A-A}(0)$ and  $\E^{+} (A) = r_{A+A-A-A}(0)=\sum_x r^2_{A+A} (x) = \sum_x r^2_{A-A} (x)$.

	Now we are ready to introduce the main object of our paper. 
	Let $A,B,C,D \subseteq \F_p$ be four sets.
	By $\Q(A,B,C,D)$ we denote the number of {\it collinear quadruples} in $A\times A$, $B\times B$, $C\times C$, $D\times D$.
	If $A=B=C=D$, then we write $\Q(A)$ for $\Q(A,A,A,A)$.
	Recent results on the quantity $\Q(A)$ can be found in \cite{Petridis_quadruples} and \cite{MPR-NRS}.
	It is easy to see (or consult \cite{MPR-NRS}) that 
\begin{equation}\label{def:Q}
	\Q(A,B,C,D)  = \left| \left\{ \frac{b'-a'}{b-a} =  \frac{c'-a'}{c-a} = \frac{d'-a'}{d-a} ~:~ a,a'\in A,\, b,b'\in B,\, c,c'\in C,\, d,d'\in D \right\} \right| 
\end{equation}
\begin{equation}\label{f:Q_E_3}
		=
			\sum_{a,a'\in A} \sum_x r_{(B-a)/(B-a')} (x)  r_{(C-a)/(C-a')} (x)   r_{(D-a)/(D-a')} (x)  \,.
\end{equation}
	Notice that in (\ref{def:Q}), we mean that the condition, say, $b=a$ implies $c=d=b=a$ or, in other words, that all four points $(a,a'), (b,b'), (c,c'), (d,d')$ have the same abscissa.
	More rigorously, the summation in (\ref{f:Q_E_3})  should be taken over 
	$\F_p \cup \{+\infty\}$,
	where $x=+\infty$ means that the denominator in any fraction $x=\frac{b'-a'}{b-a}$ from, say, $r_{(B-a)/(B-a')} (x)$  equals zero. 
	Anyway, it is easy to see that the contribution of the point $+\infty$ is at most $O(M^5)$, where $M=\max\{ |A|, |B|, |C|, |D| \}$, and hence it is negligible (see, say, Theorem \ref{t:Q} below). 
	Further defining a function $q_{A,B,C,D} (x,y)$ (see \cite{MPR-NRS}) as 
\begin{equation}\label{f:def_t,q}
	q_{A,B,C,D} (x,y) :=  \left| \left\{ \frac{b-a}{c-a} = x,\, \frac{d-a}{c-a} = y ~:~  a\in A,\, b\in B,\, c\in C,\, d\in D \right\} \right| \,,
\end{equation}
	we obtain another formula for the quantity $\Q(A,B,C,D)$, namely, 
$$
	\Q(A,B,C,D) 	= \sum_{x,y} q^2_{A,B,C,D} (x,y) \,.
$$

\bigskip

An optimal  (up to logarithms factors) upper bound for $\Q(A)$ was obtained in \cite{MPR-NRS}, \cite{Petridis_quadruples}.

\begin{theorem}
	Let $A\subseteq \F_p$ be a set.
	Then
$$
	\Q(A) = \frac{|A|^8}{p^2} + O(|A|^5 \log |A|) \,.
$$
	In particular, if $|A| \le p^{2/3}$,  then $\Q(A) \ll |A|^5 \log |A|$. 
\label{t:Q}
\end{theorem}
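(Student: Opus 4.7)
The plan is to rewrite $Q(A)$ as a fourth-moment sum over point-line incidences in $\mathbb{F}_p^2$ and to control this sum by a dyadic decomposition combined with a sharp point-line incidence bound for Cartesian products.

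For a line $\ell\subseteq\mathbb{F}_p^2$, write $i(\ell):=|\ell\cap(A\times A)|$. From the convention around \eqref{def:Q} (that $b=a$ forces $c=d=b=a$), the ``infinite slope'' contributions are precisely the $|A|$ vertical lines $x=a$, $a\in A$, producing $|A|\cdot|A|^4=|A|^5$; the remaining contribution is $\sum_{\ell\text{ non-vert.}}i(\ell)(i(\ell)-1)^3$, since once a base point $(a,a')\in\ell\cap(A\times A)$ is chosen, the other three points must also lie in $\ell\cap(A\times A)$. Expanding the falling factorial and using the exact identities $\sum_\ell i(\ell)=(p+1)|A|^2$ and $\sum_\ell i(\ell)^2=|A|^4+p|A|^2$ (each point lies on $p+1$ lines; each ordered pair of distinct points determines a unique line), the lower-order monomials contribute only $O(|A|^5)$, so the problem reduces to establishing
\[
S\;:=\;\sum_\ell i(\ell)^4\;=\;\frac{|A|^8}{p^2}+O\!\left(|A|^5\log|A|\right).
\]

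The lower bound $S\ge|A|^8/p^2$ is immediate from Cauchy-Schwarz over the $p(p+1)$ affine lines, using that the mean of $i(\ell)$ is $|A|^2/p$. For the matching upper bound I would split the lines dyadically, $L_k:=\{\ell:2^k\le i(\ell)<2^{k+1}\}$. The ``low'' scales $2^k\le|A|^2/p$ contribute at most
\[
\Big(\frac{|A|^2}{p}\Big)^{\!2}\sum_\ell i(\ell)^2\;=\;\frac{|A|^8}{p^2}+\frac{|A|^6}{p}\;=\;\frac{|A|^8}{p^2}+O(|A|^5),
\]
using $|A|\le p$. The ``high'' scales $2^k>|A|^2/p$ are controlled by a Szemer\'edi-Trotter-type incidence bound for Cartesian products in $\mathbb{F}_p^2$, valid in the range $|A|\le p^{2/3}$, which yields (roughly) $|L_k|\ll |A|^4/2^{3k}$; summing $2^{4k}|L_k|$ over the $O(\log|A|)$ exceptional dyadic scales produces $O(|A|^5\log|A|)$.

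The main obstacle is establishing the Szemer\'edi-Trotter-type incidence estimate in the full range $|A|\le p^{2/3}$. Fourier-analytic bounds such as Vinh's $I(P,L)=|P||L|/p+O(\sqrt{p|P||L|})$ are too weak here, producing a contribution of order $p|A|^4$ in place of $|A|^5$. The necessary refinement, extracted from Rudnev's point-plane theorem and given in the sharp Cartesian-product form in \cite{Petridis_quadruples} and \cite{MPR-NRS}, is exactly the input that both produces the $|A|^5\log|A|$ error term and fixes the $p^{2/3}$ threshold appearing in the statement.
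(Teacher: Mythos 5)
The paper does not actually prove Theorem \ref{t:Q}; it is imported from \cite{MPR-NRS} and \cite{Petridis_quadruples}, so there is no internal proof to compare against. Your sketch is, in substance, the argument of those references: reduce $\Q(A)$ to the fourth moment $\sum_\ell |\ell\cap(A\times A)|^4$ over lines, handle the scales at or below the mean $|A|^2/p$ with the exact second--moment identity (which produces the main term $|A|^8/p^2$), and control the rich lines with the Cartesian--product incidence bound derived from Rudnev's point--plane theorem. That last ingredient is precisely the estimate $|L|\ll |A|^5/k^4+|A|^2/k$ for $k\ge \max(2,\,2|A|^2/p)$ which the present paper quotes as \cite[Lemma 14]{MPR-NRS} inside the proof of Lemma \ref{l:Q(A,B,C,D)}; you have correctly identified both this key input and the reason Vinh--type Fourier bounds are insufficient. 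Note also that the threshold $|A|\le p^{2/3}$ is not a validity constraint on the incidence bound: the asymptotic holds for all $A$, and $p^{2/3}$ is simply where the main term $|A|^8/p^2$ drops below the error term $|A|^5\log|A|$.

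Two small corrections are needed. First, the rich--line bound available in $\F_p$ is $|L_k|\ll |A|^5/2^{4k}+|A|^2/2^{k}$, not the Szemer\'edi--Trotter--strength $|A|^4/2^{3k}$ you wrote (the latter is not known in positive characteristic); the weaker bound still yields $\sum_k 2^{4k}|L_k|\ll |A|^5\log|A|+|A|^2\cdot|A|^3\ll |A|^5\log|A|$, so the argument closes. Second, your claim that the lower--order monomials of $i(i-1)^3=i^4-3i^3+3i^2-i$ contribute $O(|A|^5)$ ``using the exact identities'' does not cover the cubic term: $\sum_\ell i(\ell)^3$ is not controlled by the first and second moments alone (crudely one gets only $|A|^5+p|A|^3$, which exceeds $|A|^5\log|A|$ when $|A|$ is small relative to $p$). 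This matters only for the lower bound in the two--sided asymptotic, since the cubic term enters with a negative sign, and it is repaired by running the same low/high dyadic split on the third moment, giving $\sum_\ell i(\ell)^3\ll |A|^6/p+|A|^5\log|A|\ll |A|^5\log|A|$. With these repairs your outline is a correct proof, matching the one in the cited sources.
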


We need in a simple lemma about a generalization of the quantity $\Q(A)$.
The proof is analogous of  the proof \cite[Lemma 6]{SZ}.

\begin{lemma}
	Let $A,B \subseteq \F_p$ be two sets, 
	$|B| \le |A| \le \sqrt{p}$.  
	Then 
	\begin{equation}\label{f:Q(A,B,C,D)}
		\Q(A,B,A,B) \ll |A|^{5/2}  |B|^{5/2} \log^2 |A| + |A|^3 |B|^2 \,. 
	\end{equation}
\label{l:Q(A,B,C,D)}
\end{lemma}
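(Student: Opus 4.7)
The plan is to interpret $\Q(A,B,A,B)$ geometrically as a sum over affine lines in $\F_p^2$, isolate the contribution of axis-parallel lines (which produces the term $|A|^3|B|^2$), and bound the remainder by Cauchy--Schwarz combined with Theorem~\ref{t:Q} (which produces the term $|A|^{5/2}|B|^{5/2}\log^2|A|$).

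For each affine line $\ell \subset \F_p^2$ set $a(\ell) := |(A\times A)\cap \ell|$ and $b(\ell) := |(B\times B)\cap \ell|$. A direct inspection of (\ref{def:Q}) and (\ref{f:Q_E_3}) shows that, up to the ``point at infinity'' corrections discussed after (\ref{f:def_t,q}),
\[
\Q(A,B,A,B) \;=\; \sum_\ell a(\ell)^2 b(\ell)^2, \qquad \Q(X) \;=\; \sum_\ell x(\ell)^4 \quad (X\in\{A,B\}).
\]
A vertical line $x = x_0$ has $a(\ell) b(\ell) > 0$ only if $x_0 \in A \cap B$, in which case $a(\ell)^2 b(\ell)^2 = |A|^2 |B|^2$; summing over $x_0$ and using $|A \cap B| \le |B|$, the vertical contribution is at most $|A|^2 |B|^3 \le |A|^3 |B|^2$ (the last step because $|B| \le |A|$). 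Horizontal lines give the same bound, so axis-parallel lines together contribute $O(|A|^3 |B|^2)$.

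For the remaining slanted lines I would apply Cauchy--Schwarz,
\[
\sum_{\ell\text{ slanted}} a(\ell)^2 b(\ell)^2 \;\le\; \Big(\sum_\ell a(\ell)^4\Big)^{1/2}\Big(\sum_\ell b(\ell)^4\Big)^{1/2} \;\le\; \sqrt{\Q(A)\,\Q(B)}.
\]
Since $|B| \le |A| \le \sqrt{p} \le p^{2/3}$, Theorem~\ref{t:Q} supplies $\Q(A) \ll |A|^5 \log|A|$ and $\Q(B) \ll |B|^5 \log|B|$, whence
\[
\sqrt{\Q(A)\Q(B)} \;\ll\; |A|^{5/2} |B|^{5/2} \sqrt{\log|A|\log|B|} \;\le\; |A|^{5/2} |B|^{5/2} \log^2|A|.
\]
Adding the two contributions gives (\ref{f:Q(A,B,C,D)}).

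The main obstacle will be the careful treatment of degenerate configurations. The blanket $O(M^5) = O(|A|^5)$ estimate recorded after~(\ref{f:def_t,q}) is too weak in the regime $|B| \ll |A|$, so one has to verify directly that the ``point at infinity'' corrections actually consist only of vertical-line contributions (already accounted for in the axis-parallel step) and are therefore absorbed into $|A|^3 |B|^2$; an entirely analogous check is required for the identity $\Q(X) = \sum_\ell x(\ell)^4$ that legitimises the Cauchy--Schwarz step.
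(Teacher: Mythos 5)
Your overall strategy (isolate a degenerate class of lines, then apply Cauchy--Schwarz against $\Q(A)$ and $\Q(B)$ via Theorem \ref{t:Q}) is workable, but the class of lines you isolate is the wrong one, and this is a genuine gap rather than a bookkeeping issue. The term $|A|^3|B|^2$ in (\ref{f:Q(A,B,C,D)}) is not there to absorb the axis-parallel lines; it is there to absorb the lines meeting $B\times B$ (or $A\times A$) in \emph{exactly one} point, and those lines cannot be fed into your Cauchy--Schwarz step. Indeed, writing $a(\ell)=|\ell\cap(A\times A)|$ and $b(\ell)=|\ell\cap(B\times B)|$, the lines with $b(\ell)=1$ that also meet $A\times A$ number up to about $|A|^2|B|^2$ (one for each pair consisting of a point of $A\times A$ and a point of $B\times B$; if $A\cap B\neq\emptyset$ the pencils through common points even give $\sim p\,|A\cap B|^2$ of them). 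Hence the quantity $\sum_{\ell}b(\ell)^4$ produced by your Cauchy--Schwarz is at least $|A|^2|B|^2$, which exceeds the bound $|B|^5\log|B|$ you want from Theorem \ref{t:Q} as soon as $|B|\ll|A|^{2/3}$. For the same reason the identities $\Q(X)=\sum_\ell x(\ell)^4$ and $\Q(A,B,A,B)=\sum_\ell a(\ell)^2b(\ell)^2$ are false as stated: a configuration supported on a single point contributes once to the left-hand sides but once \emph{per line through that point} to the right-hand sides, and for $|A|\ll p^{1/3}$ the discrepancy, of order $p|A|^2$, dominates $|A|^5\log|A|$. You do flag ``degenerate configurations'' at the end, but your proposed resolution --- that the corrections consist only of vertical-line contributions --- is precisely what fails: the problematic lines have arbitrary slope.

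The repair is the one the paper uses. First dispose of all lines with $a(\ell)\le 1$ or $b(\ell)\le 1$ by a direct count in the tuple definition (\ref{def:Q}): fixing the unique point $(b,b')\in B\times B$ on such a line, the number of collinear triples it forms with two points of $A\times A$ is $O(|A|^3)$, giving $O(|B|^2|A|^3)$ in total (and symmetrically $O(|A|^2|B|^3)\le O(|A|^3|B|^2)$). Only for the remaining lines, which meet each grid in at least two points, may one run your Cauchy--Schwarz, and there it does work: one has $\sum_{\ell:\,a(\ell),b(\ell)\ge2}a(\ell)^4\ll\Q(A)+|A|^4\ll|A|^5\log|A|$ and likewise for $B$, so this part of your argument is a legitimate (and slightly cleaner) alternative to the paper's route, which instead decomposes dyadically and applies the incidence bound $|L_{i,j}|\ll\min\big(|A|^5k^{-4}+|A|^2k^{-1},\,|B|^5l^{-4}+|B|^2l^{-1}\big)$ to each class before taking a geometric mean. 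As written, however, your proof does not close.
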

\begin{proof}
	Write  $L_{i, j}$, $i,j\ge 0$ for the set of lines $\ell$ such that
    $2^{i} \leq |\ell \cap (A \times A)| < 2^{i+1}$ and $2^{j} \leq |\ell \cap (B \times B)| < 2^{j+1}$. 
	Then
    \begin{equation}\label{tmp:16.01.2017_1}
		\Q(A,B,A,B)  \ll \sum^{\log |A|}_{i = 0}\, \sum^{\log |B|}_{j = 0} |L_{i, j}| 2^{2i} 2^{2j} \,.
    \end{equation}
	First of all, consider the lines $\ell$ with $j=0$.
	In other words, each of such a line intersects $B\times B$ exactly at one point. 
	Then we choose a point from $A$, forming a line intersecting $A\times A$, and obtain at most $|A|$ points on each of these lines.
	It gives us at most $|A|^3$ incidences. 
	Another proof  of this fact is the following. 
	Fixing $b,b'$ in (\ref{def:Q}), we have at most $O(|A|^3)$ possibilities for $a,a',c,c' \in A$ such that $\frac{b'-a'}{b-a} =  \frac{c'-a'}{c-a}$.
	Totally, it gives at most $O(|B|^2 |A|^3)$ collinear triples.
	Clearly, the same aruments take place  
	for $i=0$, so we suppose below that $i,j\ge 1$.

Since the number of summands in (\ref{tmp:16.01.2017_1}) is at most $\log^2 |A|$ it is enough to bound each term by $|A|^{5/2}  |B|^{5/2}$. For the sake of notation, denote $k = 2^i$ and
$l = 2^j$, $L = L_{i,j}$,  so that our task is to estimate $|L|k^2 l^2$ where $L$ is the set of lines intersecting $A \times A$ in $k$ (up to a factor of two) points and $B \times B$ in $l$ points (again, up to a factor of two).
Here  $k \geq 2, l \geq 2$.

By the assumption $|B| \le |A| \le \sqrt{p}$. It implies that $2|A|^2/p \le 2 \le k,l$.
The arguments of the proof of Theorem \ref{t:Q} gives us (see \cite[Lemma 14]{MPR-NRS}) that for $k,l\ge 2$ the following incidence estimate holds 
$$
	|L| \ll \min \left( \frac{|A|^5}{k^4} + \frac{|A|^2}{k}, \frac{|B|^5}{l^4} + \frac{|B|^2}{l} \right) \,,
$$
so
$$
	T := k^2 l^2 |L| \ll  \min \left( \frac{ l^2 |A|^5}{k^2} + kl^2 |A|^2,  \frac{k^2 |B|^5}{l^2} + k^2 l |B|^2 \right) \,.
$$
	Since $k\le |A|$, $l\le |B|$, we see that the first terms dominate in the last formula. 
 	Hence multiplying, we get
$$
	T^2 \ll \frac{ l^2 |A|^5}{k^2} \cdot \frac{k^2 |B|^5}{l^2} = |A|^5 |B|^5 
$$
	as required. 
$\hfill\Box$
\end{proof}

\begin{remark}
	It is easy to see that we need in the term $|A|^3 |B|^2$ in (\ref{f:Q(A,B,C,D)}).
	Indeed, take $B=\{0\}$ and $A$ be a multiplicative subgroup in $\F_p$.
	Then one can check that $\Q(A,B,A,B)=|A|^3$ but not $O(|A|^{5/2} \log^2 |A|)$. 
\end{remark}

\bigskip

By $\T(A,B,C)$ denote the number of {\it collinear triples} in $A\times A$, $B\times B$, $C\times C$, where $A,B,C\subseteq \F_p$ are three sets.
It is easy to check that $$\T(A,B,C) = \left| \left\{ \frac{b-a}{c-a} = \frac{b'-a'}{c'-a'}  ~:~ a,a'\in A,\, b,b'\in B,\, c,c'\in C \right\} \right| = \sum_x t^2_{A,B,C} (x) \,,$$
where $t_{A,B,C} (x) = \sum_{a\in A} r_{(B-a)/(C-a)} (x)$.
The support of the function $t_{A,B,C} (x)$ is denoted by $\T[A,B,C]$. 
One has (see \cite{s_diff})
\begin{equation}\label{f:basic_identity}
	\T [A,B,A] = 1-\T[A,B,A] \,. 
\end{equation}
If $A=B=C$, then we write $\T[A]$ for $\T[A,B,C]$ and $\T(A)$ for $\T(A,B,C)$. 
Let us recall a result about $\T[A]$ from \cite{MPR-NRS}.

\begin{theorem}\label{R[A]}
Let  $A \subseteq \F_p$. Then\\ 
$\bullet~$ $|\T[A]| \gg \min\left\{ p, \frac{|A|^{5/2}}{p^{1/2}} \right\}.$ \\
$\bullet~$  $|\T[A]| \gg \min\{p, |A|^{\tfrac{3}{2} + \tfrac{1}{22} - o(1)} \}$, with the $o(1)$ term tending to 0 as $p \to \infty$.\\
$\bullet~$  
	$|\T[A]| \gg \min \left\{ p^{2/3}, \frac{|A|^{8/5}}{\log^{28/15} |A|} \right\}$. 
\end{theorem}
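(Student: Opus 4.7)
The plan is to reduce the lower bound on $|\T[A]|$ to an upper bound on the collinear-triple count $\T(A)$ via Cauchy--Schwarz, and then to feed in three different point-line incidence inputs in $\F_p^2$. Since $\sum_x t_A(x)$ counts the triples $(a,b,c)\in A^3$ with $c\neq a$ and so is $\gg |A|^3$, Cauchy--Schwarz on the support $\T[A]$ of $t_A$ gives
\[
	|A|^6 \;\ll\; \Bigl(\sum_x t_A(x)\Bigr)^{2} \;\le\; |\T[A]| \cdot \sum_x t^2_A(x) \;=\; |\T[A]| \cdot \T(A),
\]
so that each of the three lower bounds $|\T[A]| \gg R$ follows from a matching upper bound $\T(A) \ll |A|^6/R$.

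The key next step is that $\T(A) = \sum_\ell k_\ell^3$, where $\ell$ ranges over lines of $\F_p^2$ and $k_\ell = |\ell \cap (A\times A)|$. A dyadic decomposition $k_\ell \asymp 2^j$ together with the notation $|L_j|$ for the number of $2^j$-rich lines reduces the task to estimating $\sum_j |L_j|\,2^{3j}$, which is exactly the framework of Lemma~\ref{l:Q(A,B,C,D)} but with weight $2^{3j}$ in place of $2^{2i+2j}$. For each bullet one then inserts a different incidence bound for $|L_j|$ on $A\times A$: the first bound $|A|^{5/2}/p^{1/2}$ comes from an elementary moment estimate (essentially the one that appears implicitly in the proof of Theorem~\ref{t:Q}), with the $p^{1/2}$ loss absorbing the modular scaling and keeping the bound valid up to $|A|\sim p^{3/5}$; the second, $|A|^{3/2+1/22-o(1)}$, follows by inserting the Stevens--de Zeeuw point-line incidence theorem for prime fields, whose quantitative improvement over the trivial bound translates into the $+1/22$ gain in the exponent; the third, $|A|^{8/5}/\log^{28/15}|A|$, uses the full strength of Rudnev's point--plane incidence theorem applied exactly as in the proof of Theorem~\ref{t:Q}.

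I expect the third bullet to be the main obstacle. One has to balance the weight $2^{3j}$ against the two-sided bound $|L_j|\ll |A|^5/2^{4j} + |A|^2/2^j$ across all dyadic levels simultaneously, ensuring that neither the small-$j$ nor the large-$j$ tails dominate; the threshold $p^{2/3}$ in the statement is precisely the point where the Rudnev-type estimate ceases to be nontrivial, and the exponent $8/5$ emerges from the optimal split between the two terms of the incidence bound. The polylogarithmic factor $\log^{28/15}|A|$ is the accumulated cost of several dyadic pigeonholings stacked on top of one another, and pinning this loss down to exactly this fractional power (rather than a cruder $\log^{3}|A|$) is the technical heart of the argument.
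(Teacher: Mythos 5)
The first thing to say is that the paper does not prove this statement at all: Theorem \ref{R[A]} is quoted verbatim from \cite{MPR-NRS} ("Let us recall a result about $\T[A]$ from \cite{MPR-NRS}"), so there is no internal proof to compare against, and your argument has to stand on its own. Your Cauchy--Schwarz reduction $|A|^6 \ll |\T[A]|\cdot \T(A)$ is correct and does deliver the first bullet, where the matching input is the collinear-triples estimate $\T(A) \ll |A|^6/p + p^{1/2}|A|^{7/2+o(1)}$ (a Fourier/second-moment bound over lines, not really the point--plane machinery behind Theorem \ref{t:Q}); the crossover at $|A|\sim p^{3/5}$ is as you say. The gap is in the second and third bullets. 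To extract $|\T[A]| \gg |A|^{8/5}/\log^{28/15}|A|$ from your reduction you would need $\T(A) \ll |A|^{22/5}\log^{28/15}|A|$, and for the second bullet $\T(A)\ll |A|^{9/2-1/22+o(1)}$. No such bounds exist, and your dyadic optimization cannot produce them: writing $k_\ell=|\ell\cap(A\times A)|$, one has $\sum_\ell k_\ell^2 \ll |A|^4$ trivially and $\sum_\ell k_\ell^4 = \Q(A) \ll |A|^5\log|A|$ from Theorem \ref{t:Q}, and Cauchy--Schwarz between these (the optimal way to interpolate the two terms of the incidence bound you quote) gives only $\T(A)=\sum_\ell k_\ell^3 \ll |A|^{9/2}\log^{1/2}|A|$, hence only $|\T[A]| \gg |A|^{3/2}/\log^{1/2}|A|$; summing $2^{3j}|L_j|$ directly against $|L_j|\ll |A|^5/2^{4j}$ is even worse, as the low dyadic levels contribute $|A|^5$ up to logarithms. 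Improving the triple count below $|A|^{9/2}$ is essentially the open sum--product bottleneck, so no choice of incidence input rescues the scheme.

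The bullets beyond exponent $3/2$ are genuinely not "support of a function with small second moment" statements; the actual proofs in \cite{MPR-NRS} exploit the algebraic self-similarity of the set $R=\T[A]$ itself --- $R$ is invariant under $x\mapsto 1-x$ and $x\mapsto 1/x$ (cf.\ (\ref{f:basic_identity})) and contains the ratio sets $(A-a)/(A-a)$ for every $a\in A$ --- and run a second layer of energy/incidence arguments on $R$: for the $8/5$ bound, an argument with the function $q_{A,A,A,A}$ and $\Q(A)$ in which the support of $q$ is constrained by $R$ in several coordinates simultaneously (structurally very close to Proposition \ref{p:main} of this paper), and for the $3/2+1/22$ bound the sum--product machinery of \cite{MPR-NRS}. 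Without some exploitation of this structure, your approach caps out at $|A|^{3/2-o(1)}$ for small $A$, which is strictly weaker than both of the last two bullets.
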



Now let us 
remind 
some results about multiplicative subgroups. 
We need a simplified version of 
Theorem 8 
from \cite{Sh_ineq}.

\begin{theorem}
    Let $p$ be a prime number and $\G \subseteq \F^*_p$ be a multiplicative subgroup,
    $|\G| \le p^{2/3}$.
    Then
    \begin{equation}\label{f:subgroup_energy-}
        \E^{+} (\G)
            \ll
                        |\G|^3 p^{-\frac{1}{3}} \log^{} |\G| + p^{\frac{1}{26}} |\G|^{\frac{31}{13}} \log^{\frac{8}{13}} |\G| \,,
     \end{equation}
	and 
     \begin{equation}\label{f:subgroup_energy}
        \E^{+} (\G)
            \ll
      			|\G|^{\frac{32}{13}} \log^{\frac{41}{65}} |\G| \,,
    \end{equation}
	provided $|\G| < p^{1/2} \log^{-1/5} p$.
\label{t:32/13}
\end{theorem}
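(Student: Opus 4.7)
My plan follows the standard Shkredov paradigm for bounding the additive energy of a multiplicative subgroup: dyadic decomposition of level sets, reduction to a collinear-triples count, and an incidence/third-energy bootstrap. Concretely, let $r(x) := r_{\G-\G}(x)$ and $P_\Delta := \{x : \Delta \le r(x) < 2\Delta\}$ for dyadic $\Delta$. Since $\G$ is a multiplicative subgroup, $r(\lambda x) = r(x)$ for every $\lambda \in \G$, so each $P_\Delta$ is a union of cosets of $\G$ --- the key structural property that distinguishes the subgroup case from that of a generic set. Then $\E^+(\G) = \sum_x r(x)^2 \ll \log|\G| \cdot \max_\Delta \Delta^2 |P_\Delta|$, so it suffices to bound $\Delta^2 |P_\Delta|$ uniformly in $\Delta$.

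For the level-set bound, I would first reduce $|P_\Delta|$ to a collinear-triples count by showing $\Delta^3 |P_\Delta| \ll \T(\G) + |\G|^4$ (or a weighted variant thereof) and then invoke a Rudnev-style point-plane incidence estimate for $\T(\G)$; the $\G$-invariance of $P_\Delta$ is what allows such a bound to be effective. A direct pigeonhole argument from here yields only $\E^+(\G) \ll |\G|^{5/2}$, far from the target, so to sharpen to $|\G|^{32/13}$ I would introduce the third energy $\E_3^+(\G) := \sum_x r(x)^3$, apply the H\"older chain $\E^+(\G)^2 \le |\G| \cdot \E_3^+(\G)$, bound $\E_3^+(\G)$ by the same incidence scheme, and iterate. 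Each iteration trades a factor of $|\G|$ against a factor of $p$ coming from the non-trivial term in Rudnev's inequality; the fixed point of this recursion produces the exponent $31/13$ together with the $p^{1/26}$ correction.

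The bound (\ref{f:subgroup_energy-}) then appears as a sum of two terms: $|\G|^3 p^{-1/3} \log|\G|$ from the ``random'' regime $\Delta \sim |\G|^2/p$ where pigeonhole dominates, and $p^{1/26} |\G|^{31/13} \log^{8/13}|\G|$ from the ``structured'' regime where the incidence bound dominates. For (\ref{f:subgroup_energy}) the assumption $|\G| < p^{1/2}\log^{-1/5} p$ forces $p^{1/26} \ll |\G|^{1/13}$, collapsing both terms into $|\G|^{32/13}\log^{41/65}|\G|$, with the log exponent determined by balancing the dyadic loss at each iteration. The principal obstacle is this bootstrap in the structured regime: matching $32/13$ exactly (rather than something strictly weaker) requires iterating the second-third-energy exchange precisely the right number of times and tracking the $\log$ factors throughout, and any suboptimality in the H\"older chain or in the incidence estimate yields a strictly worse exponent.
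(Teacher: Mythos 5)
The paper does not actually prove Theorem \ref{t:32/13}: it is imported verbatim as ``a simplified version of Theorem 8 from \cite{Sh_ineq}'', so your argument has to stand on its own, and as written it does not. Your general skeleton (dyadic level sets $P_\Delta$ of $r_{\G-\G}$, their invariance under multiplication by $\G$, passage to the third energy $\E^{+}_3(\G)=\sum_x r^3_{\G-\G}(x)$) is indeed the skeleton of the proof in \cite{Sh_ineq}, but both load-bearing steps are wrong or missing. First, the level-set estimate: the input actually needed is the Stepanov-method bound of Heath-Brown--Konyagin, $\sum_{i=1}^{k}|\G\cap(\G+\xi_i)|\ll (k|\G|)^{2/3}$ for distinct nonzero cosets $\xi_i\G$ (valid only in a restricted range of $k$ and $|\G|$ --- this is where the hypothesis $|\G|\le p^{2/3}$ and the term $|\G|^3p^{-1/3}\log|\G|$ come from), which yields $|P_\Delta|\ll|\G|^3\Delta^{-3}$ and hence $\E^{+}_3(\G)\ll|\G|^3\log|\G|$. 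Your substitute, $\Delta^3|P_\Delta|\ll\T(\G)+|\G|^4\ll|\G|^4\log|\G|$, is weaker by a full factor of $|\G|$: optimized against the trivial bound $\Delta|P_\Delta|\le|\G|^2$ it gives only $\E^{+}(\G)\ll|\G|^3\log|\G|$, i.e.\ nothing beyond the trivial estimate, and in particular not the $|\G|^{5/2}$ baseline you claim it produces. (Also, the term $|\G|^3p^{-1/3}$ is not the contribution of a ``random regime $\Delta\sim|\G|^2/p$'' --- that contribution is $|\G|^4/p$ --- it is an artefact of the range condition in the Stepanov lemma.)

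Second, the inequality $\E^{+}(\G)^2\le|\G|\,\E^{+}_3(\G)$ is false: Cauchy--Schwarz gives only $\E^{+}(\G)^2\le|\G|^2\,\E^{+}_3(\G)$ (an arithmetic progression, with $\E^{+}\sim|A|^3$ and $\E^{+}_3\sim|A|^4$, violates your version), and the correct version combined with $\E^{+}_3(\G)\ll|\G|^3\log|\G|$ only reproduces $\E^{+}(\G)\ll|\G|^{5/2}\log^{1/2}|\G|$. The exponent $32/13$ is not ``the fixed point of this recursion'': in \cite{Sh_ineq} it comes from a further, quite specific higher-energy inequality relating $\E^{+}(\G)$ to $\E^{+}_3(\G)$ and to the additive energies of the popular-difference sets $\G\cap(\G+s)$, exploiting the multiplicative invariance a second time. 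Your sketch never identifies this inequality; asserting that iterating a second--third energy exchange ``produces $31/13$'' and that the logarithms ``balance to $41/65$'' is a description of the answer, not a derivation of it. As it stands the proposal proves nothing sharper than the trivial bound.
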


\begin{corollary}
	Let $\eps>0$ be a positive real and $\G\subseteq \F_p$ be a multiplicative subgroup, $|\G| \le p^{2/3-\eps}$.
	Then for some $\d(\eps)>0$ one has $\E^{+} (\G) \ll |\G|^{5/2-\d(\eps)}$. 
\label{c:32/13}
\end{corollary}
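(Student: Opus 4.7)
The plan is to apply the two bounds of Theorem \ref{t:32/13} in complementary size regimes for $|\G|$, and to verify that in each regime we obtain a polynomial saving over the trivial bound $|\G|^{5/2}$.  The key numerical observation is that $32/13 = 5/2 - 1/26$, so bound (\ref{f:subgroup_energy}) already encodes an absolute saving of $1/26$ whenever it is applicable, with only a polylogarithmic loss; the remaining work will be to extract a comparable polynomial saving from (\ref{f:subgroup_energy-}) in the range where $|\G|$ is too large for (\ref{f:subgroup_energy}) to apply.

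First, when $|\G| < p^{1/2}\log^{-1/5} p$, I would apply (\ref{f:subgroup_energy}) directly.  Since $\log^{41/65}|\G| \ll |\G|^{\eta}$ for any $\eta>0$ once $|\G| \ge C(\eps)$, this immediately yields $\E^{+}(\G) \ll |\G|^{5/2-\d}$ for any fixed $\d < 1/26$.

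In the complementary range $p^{1/2}\log^{-1/5} p \le |\G| \le p^{2/3-\eps}$, I would apply (\ref{f:subgroup_energy-}) and handle its two summands separately.  For the first summand, factor $|\G|^3 p^{-1/3} = |\G|^{5/2}\cdot |\G|^{1/2}p^{-1/3}$; the hypothesis $|\G| \le p^{2/3-\eps}$ gives $|\G|^{1/2}p^{-1/3} \le p^{-\eps/2}$, and converting $p$ back into a power of $|\G|$ via $p \le |\G|^{3/(2-3\eps)}$ shows this ratio is at most $|\G|^{-3\eps/(2(2-3\eps))}$, a saving linear in $\eps$.  For the second summand, the lower bound $|\G| \ge p^{1/2-o(1)}$ converts $p^{1/26}$ into $|\G|^{1/13+o(1)}$, so $p^{1/26}|\G|^{31/13}\log^{8/13}|\G| \ll |\G|^{32/13+o(1)}\log^{8/13}|\G| \ll |\G|^{5/2-\d}$ for any $\d<1/26$.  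Setting $\d(\eps) := \min(c\eps,\, 1/27)$ for a small absolute constant $c$ then combines both cases.

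I do not expect any substantial obstacle, as the corollary is essentially a bookkeeping consequence of Theorem \ref{t:32/13}; the only subtle point is ensuring that the polylogarithmic losses in both (\ref{f:subgroup_energy}) and (\ref{f:subgroup_energy-}) are dominated by the genuine polynomial savings, which forces $\d(\eps)$ to be chosen strictly smaller than each nominal exponent gain.
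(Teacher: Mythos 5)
Your proposal is correct and follows the intended derivation: the paper gives no explicit proof of this corollary, and the natural argument is exactly your case split, using (\ref{f:subgroup_energy}) (with $32/13=5/2-1/26$) for $|\G|<p^{1/2}\log^{-1/5}p$ and both summands of (\ref{f:subgroup_energy-}) in the complementary range $p^{1/2-o(1)}\ll|\G|\le p^{2/3-\eps}$. Your numerical bookkeeping, including the conversion of powers of $p$ into powers of $|\G|$ and the absorption of the logarithmic factors, checks out.
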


Because always $\E^{+} (\G) \ge |\G|^4/p$, it follows that $\E^{+} (\G) \gg |\G|^{5/2}$ for $|\G| \sim p^{2/3}$. 
Thus, the constant $2/3$ in Corollary \ref{c:32/13} is optimal.

\bigskip

The last needed result is the main theorem from \cite{V-S}.

\begin{theorem}
    Let $\G\subseteq \f_p$ be a multiplicative subgroup,
    $k\ge 1$ be a positive integer, and $x_1,\dots,x_k$ be different nonzero elements.
    Also, let
     \begin{equation}\label{f:main_many_shifts_cond}
        32 k 2^{20k \log (k+1)} \le |\G|\,, \quad  p \ge 4k |\G|  ( |\G|^{\frac{1}{2k+1}} + 1 ) \,.
    \end{equation}
    Then
    \begin{equation}\label{f:main_many_shifts}
        |\G\bigcap (\G+x_1) \bigcap \dots  \bigcap (\G+x_k)| \le 4 (k+1) (|\G|^{\frac{1}{2k+1}} + 1)^{k+1} \,.
    \end{equation}
	    Further 
    \begin{equation}\label{f:C_for_subgroups}
    	|\Gamma \bigcap (\Gamma+x_1) \bigcap \dots \bigcap (\Gamma+x_k)| 
    		= \frac{|\Gamma|^{k+1}}{(p-1)^k} + \theta k 2^{k+3} \sqrt{p} \,,
	\end{equation}
    where $|\theta| \le 1$.
    The same holds if one replaces $\G$ in (\ref{f:main_many_shifts}) by any cosets of $\G$.
\label{t:main_many_shifts}
\end{theorem}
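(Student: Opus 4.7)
The plan is to prove the two conclusions of Theorem~\ref{t:main_many_shifts} by two complementary techniques: the exact asymptotic (\ref{f:C_for_subgroups}) via multiplicative character orthogonality together with Weil's bound, and the sharper combinatorial estimate (\ref{f:main_many_shifts}) via Stepanov's polynomial method. The two arguments are essentially independent; the first is short and analytic, the second combinatorial but more delicate.

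For (\ref{f:C_for_subgroups}), let $t=|\G|$ and let $H\subseteq \widehat{\F_p^*}$ be the subgroup of characters trivial on $\G$, so $|H|=(p-1)/t$ and $\mathbf{1}_\G(y)=|H|^{-1}\sum_{\psi\in H}\psi(y)$ for $y\in\F_p^*$. Substituting this expansion into
$$ N := \sum_{y}\mathbf{1}_\G(y)\prod_{i=1}^k \mathbf{1}_\G(y-x_i) $$
produces a sum over $(\psi_0,\dots,\psi_k)\in H^{k+1}$ of inner character sums $\sum_y \psi_0(y)\prod_i \psi_i(y-x_i)$. The all-trivial tuple contributes the main term $|\G|^{k+1}/(p-1)^k$ up to an $O(k)$ boundary error from the excluded points $0,x_1,\dots,x_k$. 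Every other tuple has at least one non-trivial $\psi_i$, and since the $x_i$ are distinct and nonzero the associated rational function is not a perfect $d$-th power for the order $d$ of the characters in play; Weil's theorem then bounds the inner sum by $k\sqrt p$. Summing over the $|H|^{k+1}-1$ non-trivial tuples and dividing by $|H|^{k+1}$ gives an overall error of $O(k\sqrt p)$, and careful bookkeeping of degenerate sub-cases extracts the explicit constant $k\cdot 2^{k+3}$.

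For (\ref{f:main_many_shifts}) one employs Stepanov's method. Each $y\in S:=\G\cap(\G+x_1)\cap\dots\cap(\G+x_k)$ satisfies $y^t=1$ and $(y-x_i)^t=1$. Build an auxiliary polynomial
$$ F(X)=\sum_{\mathbf{a}}c_{\mathbf{a}} X^{a_0}(X-x_1)^{a_1}\cdots(X-x_k)^{a_k}, $$
with $\mathbf{a}=(a_0,\dots,a_k)$ ranging over a box $[0,A)^{k+1}$, and determine the coefficients $c_{\mathbf{a}}$ by requiring $F$ together with its first $m-1$ Hasse derivatives to vanish at every $y\in S$. This imposes at most $m|S|$ linear conditions on $A^{k+1}$ unknowns, so for $A^{k+1}>m|S|$ a non-trivial solution exists. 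Using $X^t\equiv 1$ and $(X-x_i)^t\equiv 1$ in the quotient algebra defining $S$, one reduces each derivative to a polynomial of controlled degree, verifies that the resulting $F$ is not the zero polynomial, and concludes $m|S|\le \deg F$. Optimizing with $A\sim|\G|^{1/(2k+1)}$ and $m\sim|\G|^{k/(2k+1)}$ balances the parameters and yields (\ref{f:main_many_shifts}).

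The main obstacle is the non-vanishing step in Stepanov's construction: one must show that the polynomial produced by the pigeonhole is not identically zero after its natural reduction modulo $X^t-1$ and the shifted analogues $(X-x_i)^t-1$, which is a linear-independence statement about monomials in $\F_p[X]$ that must be verified by hand. This is precisely where the hypotheses (\ref{f:main_many_shifts_cond}) enter: the lower bound on $|\G|$ provides enough room for the parameters $A$ and $m$ to be chosen with $A^{k+1}>m|S|$, and the upper bound on $p$ keeps the various reductions non-degenerate. The character-sum proof of (\ref{f:C_for_subgroups}) by contrast requires no such assumption but only gives a $\sqrt p$-type error, which is weaker than (\ref{f:main_many_shifts}) exactly in the regime where $|\G|$ is small compared to $p$.
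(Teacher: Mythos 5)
First, note that the paper itself contains no proof of Theorem \ref{t:main_many_shifts}: it is quoted verbatim as the main theorem of \cite{V-S} and used as a black box. Your two-pronged plan does, in fact, coincide with how the result is actually established in that source: the asymptotic formula (\ref{f:C_for_subgroups}) by expanding the indicator of $\Gamma$ over the group of multiplicative characters trivial on $\Gamma$ and invoking Weil's bound for the non-principal tuples, and the bound (\ref{f:main_many_shifts}) by Stepanov's method with an auxiliary polynomial in $X, X-x_1,\dots,X-x_k$ forced to vanish to high order on the intersection. Your character-sum argument for (\ref{f:C_for_subgroups}) is essentially complete: the main term, the $O(k)$ boundary correction, and the $k\sqrt p$ Weil estimate for each of the at most $|H|^{k+1}$ non-trivial tuples assemble exactly as you describe, and the stated constant $k2^{k+3}$ is generous enough to absorb the bookkeeping.

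The genuine gap is in the Stepanov half. You correctly identify the non-vanishing of the auxiliary polynomial (after reduction using $y^{t}=1$ and $(y-x_i)^{t}=1$, $t=|\Gamma|$) as ``the main obstacle,'' but you then merely assert that it ``must be verified by hand'' rather than verifying it. That step is not a routine check: it is the entire technical content of the argument, it is where the linear independence of the reduced monomials $X^{a_0}(X-x_1)^{a_1}\cdots(X-x_k)^{a_k}$ must be established (typically via a Wronskian-type or leading-term argument), and it is precisely what forces the specific shape of the hypotheses (\ref{f:main_many_shifts_cond}) --- the lower bound $32k2^{20k\log(k+1)}\le|\Gamma|$ and the condition $p\ge 4k|\Gamma|(|\Gamma|^{1/(2k+1)}+1)$, which, contrary to your phrasing, is a \emph{lower} bound on $p$ (equivalently an upper bound on $|\Gamma|$) needed so that the degrees involved do not wrap around and the vanishing conditions remain meaningful. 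Likewise the explicit constant $4(k+1)$ and the exponent $(|\Gamma|^{1/(2k+1)}+1)^{k+1}$ can only emerge from an actual optimization of $A$ and $m$ against a concrete degree bound, which you do not carry out. As written, the second half is a correct plan pointing at the right method, but not a proof; to make it one you would need to execute the non-vanishing lemma and the parameter count, or else simply cite \cite{V-S} (and \cite{SSV} for the sharper constants), as the paper does.
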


Thus, 
the theorem above
asserts that
$|\G\bigcap (\G+x_1) \bigcap \dots (\G+x_k)| \ll_k |\G|^{\frac{1}{2}+\alpha_k}$,
provided
$1 \ll_k |\G| \ll_k p^{1-\beta_k}$,
where $\alpha_k, \beta_k$ are some sequences of positive numbers, and $\alpha_k, \beta_k \to 0$, $k\to \infty$.
A little bit better
bounds then in (\ref{f:main_many_shifts_cond}),  (\ref{f:main_many_shifts})
can be found 
in \cite{SSV}.

\section{The proof of Theorem \ref{t:G_intr}}
\label{sec:proof1}

Let us formulate the main technical result of this section.

\begin{proposition}
	Let $\G$ be a multiplicative subgroup, and $A,B\subseteq \F_p$ be arbitrary sets such that   
	$|B| \le |A| \le \sqrt{p}$ 
	and for some nonzero
	$\eta_1,\eta_2$ and two sets $\Omega_1, \Omega_2 \subseteq \F_p$, $|\Omega_1|, |\Omega_2|  \le |\G|$ the following holds 
	\begin{equation}\label{f:p_main}
		\T
		[B,A,A] \subseteq \eta_1 \G \cup \Omega_1 \quad   \mbox{ and } \quad \T [A,B,B] \subseteq \eta_2 \G \cup \Omega_2 \,.
	\end{equation}
	Then
\begin{equation}\label{f:main}
	|A|^4 |B|^{4} |\G| \ll \left( \E^{+} (\G)  + \o |\G|^2 + |\G|^2  \right) \cdot \left(  (|A||B|)^{5/2} \log ^2 |A| + |A|^3   |B|^2 \right)   \,, 
\end{equation}
	where $\omega = \max\{ |\Omega_1|, |\Omega_2| \}$. 
\label{p:main}
\end{proposition}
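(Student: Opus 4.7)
The plan is to produce matching lower and upper bounds on a single collinear-configuration count over $A\times A$ and $B\times B$: from below by $|A|^4|B|^4|\G|$ via (\ref{f:p_main}), and from above as an additive-energy-of-$\G$ term times $\Q(A,B,A,B)$, to which Lemma~\ref{l:Q(A,B,C,D)} applies.

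To begin with, the support bound (\ref{f:p_main}) together with Cauchy--Schwarz applied to $\sum_x t_{B,A,A}(x)=|A|^2|B|$ and $\sum_y t_{A,B,B}(y)=|A||B|^2$ yields the baseline collinear-triple inequalities
\[
\T(B,A,A)\gg \frac{|A|^4|B|^2}{|\G|+\omega},\qquad \T(A,B,B)\gg \frac{|A|^2|B|^4}{|\G|+\omega}.
\]
These are not yet in the right form, since the right-hand side of (\ref{f:main}) is a quadruple count; I will need to upgrade them.

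The heart of the argument is a joint use of the two slope constraints. For each pair of pivots $b,b'\in B$ and each slope $\eta_1\gamma\in\eta_1\G$, the relation $(a-b)=\eta_1\gamma(a'-b)$ compared with $(c-b')=\eta_1\gamma(c'-b')$ translates, after eliminating $\gamma$, into a collinearity condition on a $6$-tuple of points in $A^2\cup B^2$. Summing over $\gamma\in\G$ extracts the factor of $|\G|$ on the left-hand side, while symmetrising (using the analogous statement for $\T[A,B,B]$) pairs this count with a matching one for $B\times B$. The coincidences among additive relations produced by different $\gamma\in\G$ are counted precisely by $\E^{+}(\G)$; contributions from slopes in $\Omega_i$ give the $\omega|\G|^2$ term, and the diagonal $\gamma_1=\gamma_2$ gives the $|\G|^2$ term.

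After this reorganisation, the remaining geometric sum is, up to the standard edge-case contribution at $+\infty$ treated as in Theorem~\ref{t:Q}, the collinear-quadruple count $\Q(A,B,A,B)$ via (\ref{f:Q_E_3}), and the bound (\ref{f:main}) follows from Lemma~\ref{l:Q(A,B,C,D)}. The principal obstacle is the joint step: the double Cauchy--Schwarz must be set up so that both slope hypotheses are exploited in tandem and the factor of $|\G|$ appears multiplicatively on the left, not as $(|\G|+\omega)^2$ in the denominator (as a naive two-stage argument would give). The correct setup organises the count around a single subgroup summation variable $\gamma\in\G$ from the start, at the cost of careful tracking of the $\Omega_i$- and diagonal-error contributions.
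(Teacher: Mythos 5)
Your overall architecture is the right one --- a single Cauchy--Schwarz step trading the trivial count $|A|^2|B|^2$ against $\Q(A,B,A,B)$ (handled by Lemma \ref{l:Q(A,B,C,D)}) times the size of a support set, with $\E^{+}(\G)$ controlling that support --- and this is indeed how the paper proceeds. But the decisive middle step is not actually carried out in your proposal, and as written it points at the wrong object. The paper works with the two-variable function $q(x,y)=q_{A,B,A,B}(x,y)$ from (\ref{f:def_t,q}): writing $x=\frac{b-a}{a'-a}$, $y=\frac{b'-a}{a'-a}$, the first hypothesis in (\ref{f:p_main}) forces $1-1/x=\frac{b-a'}{b-a}\in\eta_1\G\cup\Omega_1$ (likewise for $y$), while the second forces $x/y=\frac{b-a}{b'-a}\in\eta_2\G\cup\Omega_2$; thus both hypotheses constrain the support of the \emph{same} function simultaneously, and Cauchy--Schwarz gives $|A|^4|B|^4\ll|\mathrm{supp}\,q|\cdot\Q(A,B,A,B)$ in one stroke. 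Your ``heart of the argument'' instead eliminates $\gamma$ between two relations with pivots $b,b'$, which produces the collinear-\emph{triple} count $\T(B,A,A)$ (a $6$-tuple condition), not the quadruple count $\Q(A,B,A,B)$; the subsequent ``symmetrising'' that is supposed to fold in the second hypothesis and upgrade triples to quadruples is not a defined operation, and without it the two hypotheses remain decoupled --- which is exactly the $(|\G|+\omega)^2$ trap you yourself warn against.

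The second missing ingredient is the identity that actually produces the factor $\E^{+}(\G)/|\G|$. In the paper, the generic part of the support is contained in $\left\{(x,y): x,y\in(1-\eta_1\G)^{-1},\ x/y\in\eta_2\G\right\}$, whose size is the number of solutions of $1-\eta_1\gamma_1=\eta_2\gamma(1-\eta_1\gamma_2)$; the substitution $\gamma_2'=\gamma\gamma_2$ linearises this to $1-\eta_1\gamma_1=\eta_2\gamma-\eta_1\eta_2\gamma_2'$, and the $|\G|^{-1}$ (equivalently, the factor $|\G|$ on the left of (\ref{f:main})) comes from the invariance of the relevant representation function under dilation by $\G$, after which H\"older gives the bound $|\G|^{-1}\E^{+}(\G)$. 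Your sentence ``the coincidences among additive relations produced by different $\gamma\in\G$ are counted precisely by $\E^{+}(\G)$'' asserts the conclusion of this computation without supplying either the substitution or the dilation-invariance argument, and your attribution of the $|\G|^2$ term to the diagonal $\gamma_1=\gamma_2$ is not how that term arises (it comes from the crude bound on the exceptional part of the support involving $\Omega_1$, $\Omega_2$ and $\{0\}$). So while the plan names the correct ingredients, the proof as proposed has a genuine gap at its central step.
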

\begin{proof}
	Let $q(x,y) :=q_{A,B,A,B} (x,y)$. 
	Thus,  any pair $(x,y)$ from the support of the function $q(x,y)$ can be represented as $x = \frac{b-a}{a'- a}$, $y = \frac{b'-a}{a'- a}$, where $a,a'\in A$, $b,b'\in B$. 
	It is easy to see that $\frac{1}{x} = \frac{a'-a}{b-a}$ and because (\ref{f:p_main})
	$$
		1-\frac{1}{x} = 
		1- \frac{a'-  a}{b-a} = \frac{b-a'}{b-a} \in 
		\T [B,A,A] \subseteq \eta_1 \G \cup \Omega_1 \,.
	$$
	Hence 
	$x,y \in (1-(\eta_1 \G \cup \Omega_1))^{-1}  \cup \{0\} := S_1$.
	Also, notice that in view of the second condition from (\ref{f:p_main}), we have for $a\neq a'$ that 
	$$
		\frac{x}{y} = \frac{b-a}{b'-a} \in \T[A,B,B] \subseteq 
		\eta_2 \G \cup \Omega_2 := S_2 \,.
	$$
	These inclusions allow us to obtain a good upper bound for size of the support of the function $q (x,y)$. Let  $\sigma:= |\mathrm{supp}\, q|$.
	By Lemma \ref{l:Q(A,B,C,D)}, we get 
\begin{equation}\label{tmp:14.01.2017_1}
	(|B|^2 |A|^2 )^2 \ll \left( \sum_{x,y} q(x,y) \right)^2 \le \sum_{x,y} S_1 (x) S_1 (y) S_2(x/y) \cdot \sum_{x,y} q^2 (x,y) = \sigma \cdot \Q(A,B,A,B)
\end{equation}
$$
	\ll 
		\sigma \left(  (|A||B|)^{5/2} \log ^2 |A| + |A|^3  |B|^2 \right) \,.
$$
	In the first inequality of (\ref{tmp:14.01.2017_1}) we have used the condition that $|A|, |B| >1$. 
	It remains to estimate the sum $\sigma$. 
	Let $\sigma'$ be the subsum of $\sigma$, where all variables $x,y,x/y$ do not belong to $(1-\Omega_1)^{-1} \cup \Omega_2 \cup \{0\}$ and let $\sigma''$ be the rest. 
	We have
\begin{equation}\label{f:en_tmp1}
	\sigma' \le \left| \left\{ \frac{1-\eta_1 \gamma_1}{1-\eta_1 \gamma_2} = \eta_2 \gamma ~:~ \gamma_1,\gamma_2,\gamma \in \G \right\} \right| 
	=
	 \left| \left\{ 1-\eta_1 \gamma_1 = \eta_2 \gamma (1-\eta_1 \gamma_2) ~:~ \gamma_1,\gamma_2,\gamma \in \G \right\} \right| 
\end{equation}
$$
	=
	 \left| \left\{ 1-\eta_1 \gamma_1 = \eta_2 \gamma- \eta_1 \eta_2 \gamma'_2 ~:~ \gamma_1,\gamma'_2,\gamma \in \G \right\} \right| 
	=
$$
\begin{equation}\label{f:en_tmp2}
	=
	 |\G|^{-1} \E^{+} (\G, -\eta_1 \G, \eta_2 \G, -\eta_1 \eta_2 \G) \le |\G|^{-1} \E^{+} (\G) \,.
\end{equation}
	Here $\E^{+} (A,B,C,D) = |\{ (a,b,c,d) \in A\times B\times C \times D ~:~ a+ b=c+d\}|$ and the last bound in (\ref{f:en_tmp2})  is a consequence of the H\"older inequality.  
	Finally, using a crude bound for the sum $\sigma''$, we obtain 
$$
	\sigma'' \le 3\o (|\G| + \o) + 3\o  (|\G|+ \o) + 3 (|\G|+\o) \le 12 \o |\G| + 6 |\G| \,.
$$	
This completes the proof.
$\hfill\Box$
\end{proof}

\bigskip 

\begin{remark}
	Using Theorem \ref{t:main_many_shifts} with $k=1$, one can improve the error term  $\o |\G|^2 + |\G|^2$ in  (\ref{f:main}).
	Indeed, for $|\G| < p^{3/4}$, say, one can replace it by  $\o |\G|^{5/3} + |\G|^2$. 
	It gives more room in  inclusions (\ref{f:p_main}) allowing $\o$ be 
	 $|\G|^{5/6-\eps_0}$ for some $\eps_0>0$, see Corollary \ref{cor:main} below. 
\end{remark}

\bigskip

Now we are ready to prove Theorem \ref{t:G_intr} from the introduction.

\begin{corollary}
	Let $\eps>0$ be an arbitrary number, and $\G$ be a sufficiently large multiplicative subgroup,  $|\G| \ge C(\eps)$, $|\G| \le p^{2/3-\eps}$.
	Then for any $A,B\subseteq \F_p$, $|A|, |B| >1$ one has $\G \neq A+B$. 
\label{cor:main}
\end{corollary}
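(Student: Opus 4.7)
Argue by contradiction: assume $\G=A+B$ with $|A|,|B|>1$, and WLOG $|B|\le|A|$. Since $|A+B|=|\G|$, one has $|A||B|\ge|\G|$ together with $|A|,|B|\le|\G|\le p^{2/3-\eps}$. The main regime to treat is the balanced one ($|A|\sim|B|\sim|\G|^{1/2}$, well inside $\sqrt p$), where Proposition \ref{p:main} applies; very unbalanced decompositions (for instance $|B|$ of bounded size, forcing $A$ to essentially cover $\G$) admit a direct separate argument. I would invoke Proposition \ref{p:main} with this same $A,B$.

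The decisive preliminary is to verify the two structural inclusions
$$\T[B,A,A]\subseteq \eta_1\G\cup\Omega_1 \quad\text{and}\quad \T[A,B,B]\subseteq \eta_2\G\cup\Omega_2,$$
with $\omega := \max(|\Omega_1|,|\Omega_2|)$ as small as $|\G|^{5/6-\eps_0}$ for some $\eps_0=\eps_0(\eps)>0$. The input from $A+B=\G$ is the identity: for any $a_1,a_2\in A$ and $b\in B$, setting $\gamma_i := a_i+b\in\G$,
$$\frac{a_1-b}{a_2-b} = \frac{\gamma_1-2b}{\gamma_2-2b};$$
the symmetric identity $(b_1-a)/(b_2-a) = (\gamma_1-2a)/(\gamma_2-2a)$ handles $\T[A,B,B]$. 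The plan is to apply Theorem \ref{t:main_many_shifts} with $k=1$ (yielding $|\G\cap(\lambda\G+\mu)|\ll|\G|^{1/3}$) and pigeonhole on the shift $2b$ (respectively $2a$) to show that all but $|\G|^{5/6-\eps_0}$ of these ratios concentrate in a single multiplicative coset $\eta_1\G$ (resp.\ $\eta_2\G$), the remainder forming the exceptional set $\Omega_i$.

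Granting the inclusions, I would invoke the improved form of Proposition \ref{p:main} indicated in the Remark (which uses Theorem \ref{t:main_many_shifts} with $k=1$ to replace $\omega|\G|^2$ by $\omega|\G|^{5/3}$):
$$|A|^4|B|^4|\G| \ll \bigl(\E^+(\G)+\omega|\G|^{5/3}+|\G|^2\bigr) \bigl((|A||B|)^{5/2}\log^2|A|+|A|^3|B|^2\bigr).$$
Corollary \ref{c:32/13} provides $\E^+(\G)\ll|\G|^{5/2-\delta(\eps)}$ in the whole range $|\G|\le p^{2/3-\eps}$. In the balanced regime, where the $(|A||B|)^{5/2}\log^2|A|$ term dominates and $|A||B|\ge|\G|$, the estimate reduces to $|\G|^{5/2}(\log|\G|)^{-O(1)}\ll |\G|^{5/2-\min(\delta,\,\eps_0)}$, a contradiction once $|\G|\ge C(\eps)$ is sufficiently large.

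The main obstacle is establishing the structural inclusions with a sufficiently small exceptional set $\omega$. The naive containment $\T[B,A,A]\subseteq \bigcup_{b\in B}(\G-2b)/(\G-2b)$ is too coarse, since the union over $b$ easily has cardinality far exceeding $|\G|$; the nontrivial step is to pass from this $b$-by-$b$ description to concentration on a \emph{single} multiplicative coset $\eta_1\G$, the gain being driven by the subgroup-shift intersection bounds from Theorem \ref{t:main_many_shifts} together with a careful pigeonholing on the parameter $2b$.
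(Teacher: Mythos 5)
Your overall skeleton (contradiction, Proposition \ref{p:main}, the energy bound of Corollary \ref{c:32/13}, and $|A||B|\ge|\G|$ to close) matches the paper, but there is a genuine gap at the step you yourself flag as ``the main obstacle'': the verification of the hypotheses (\ref{f:p_main}). You propose to prove that the ratios $\frac{\gamma_1-2b}{\gamma_2-2b}$ concentrate, up to an exceptional set of size $|\G|^{5/6-\eps_0}$, in a \emph{single} coset $\eta_1\G$ by pigeonholing over $b$ and invoking Theorem \ref{t:main_many_shifts} with $k=1$. No such argument is needed, and the one you sketch does not obviously work: for distinct $b$ the sets $(\G-2b)/(\G-2b)$ have no a priori reason to share a common coset, and a $|\G|^{1/3}$-type intersection bound for shifts of $\G$ does not by itself force the union over all $b\in B$ into one coset plus a small exceptional set. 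The point you are missing is that $\T$ is built from \emph{differences}, so the sumset hypothesis should first be converted into a difference-set hypothesis: replace $B$ by $-B$, so that $\G=A+B$ becomes $A-B\subseteq\G$, i.e.\ $a-b\in\G$ for all $a\in A$, $b\in B$. Then every element of $\T[B,A,A]$ is $\frac{a-b}{a'-b}$ with both numerator and denominator in $\G$, hence lies in $\G$, and likewise every element of $\T[A,B,B]$ is a ratio of two elements of $-\G$, hence again lies in $\G$. So (\ref{f:p_main}) holds trivially with $\eta_1=\eta_2=1$ and $\Omega_1=\Omega_2=\{0\}$, i.e.\ $\omega=1$; no concentration lemma, no exceptional set of size $|\G|^{5/6-\eps_0}$, and no appeal to the improved remark after Proposition \ref{p:main} is required.

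A second, smaller gap: to apply Proposition \ref{p:main} you need $|B|\le|A|\le\sqrt p$, and the bound $|A|\le|\G|\le p^{2/3-\eps}$ you record does not give this when $|\G|>p^{1/2}$ and the decomposition is unbalanced (e.g.\ $|B|=2$, $|A|$ close to $|\G|$). You defer this to ``a direct separate argument'' that is never supplied. The paper disposes of it by citing the known result (Shparlinski; Vyugin--Shkredov, or Theorem \ref{t:main_many_shifts}) that any decomposition $\G=A+B$ with $|A|,|B|>1$ forces $|A|\sim|B|\sim|\G|^{1/2+o(1)}$, which simultaneously rules out the unbalanced case and yields $|A|\le\sqrt p$ from $|\G|\le p^{2/3-\eps}$. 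With these two repairs your computation in the balanced regime does reproduce the paper's contradiction $|\G|^3\ll|\G|^{3-2\d+o(1)}$.
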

\begin{proof}
	Suppose that for some $A,B\subseteq \F_p$, $|A|, |B| >1$ the following holds $\G = A+B$. 
	Without loss of generality
	 suppose that $|B| \le |A|$.
	Redefining $B=-B$, we see that conditions (\ref{f:p_main}) take place with $\eta_1=\eta_2 =1$ and $\Omega_1 = \Omega_2 = \{ 0 \}$. 
	By \cite{Shparlinski_AD}, \cite{V-S} (or just use Theorem \ref{t:main_many_shifts} above) we know that  $|A| \sim |B| \sim |\G|^{1/2+o(1)}$.
	In particular, it gives us for $\G$ large  enough 
	that 
	$|B| \le |A| \le \sqrt{p}$.
	Thus, applying Proposition \ref{p:main}, we obtain
$$
	|A|^8 |B|^{8} |\G|^2 
			\ll
	\E^{+} (\G)^2 \left( |A|^5 |B|^5  + |A|^6 |B|^4 \right) \log^4 |\G|
			\ll 
		|\G|^{o(1)} \E^{+} (\G)^2 |A|^5 |B|^5 \log^4 |\G| \,.
$$
	By Corollary \ref{c:32/13}, we find some $\d = \d(\eps) >0$ such that $\E^{+} (\G) \ll |\G|^{5/2-\d}$ and hence 
$$
	|A|^3 |B|^3 \ll |\G|^{3-2\d + o(1)} \,. 
$$
	Clearly, if $\G=A+B$, then $|A||B| \ge |\G|$.
	Whence
$$
	|\G|^3 \ll |\G|^{3-2\d + o(1)} \,. 
$$
	and it gives a contradiction for large $\G$. 
	This completes the proof.
$\hfill\Box$
\end{proof}

\begin{remark}
	Actually, our results give an effective bound for sizes of sets $A$, $B$ with $A+B\subseteq \G$. 
	It has  the form 
	$\min\{ |A|, |B| \} \ll |\G|^{1/2-c/2}$, where $c>0$ is an absolute constant  (for similar problems, see \cite{GH}).
\label{r:1_subgr}
\end{remark}

\begin{remark}
	It is easy to see that one can replace the  condition $A-B\subseteq \G$  
	onto $A-B\subseteq \G \cup \{ 0\}$, say, or, generally speaking,  onto $A-B\subseteq \G \bigsqcup \Omega$, 
	where $|\Omega| = O(|\G|^{o(1)})$. 
	Moreover, our arguments take place for sets $A,B$ with $A-B\subseteq \bigsqcup_{j=1}^s \xi_j \G$, where
	$s=O(|\G|^{o(1)})$ and $\xi_j \G$ 
	some cosets of $\G$. 

Notice that for small $\G$ it can be $\G  \bigsqcup \{0\} = A-B$, say. For example (see \cite{LS}), let $p=13$, $\G = \{1,3, 4,9,10,12 \}$, $A=B=\{ 2,5,6 \}$.  
Then one can check that, indeed,   $A-A =  \G   \bigsqcup  \{0\}$.
More generally, for any subgroup $A$, $|A|=2$ or $|A|=3$ one has $\G  \bigsqcup  \{0\} = \xi (A-A)$ for some $\xi$ and some subgroup $\G$ (see \cite{s_dss} or just use a direct calculation).
Thus, the condition $|\G| \gg 1$ is required in the corollary above. 
\label{r:2_subgr}
\end{remark}

The
next 
corollary is connected with the main result from \cite{V-S}, see Theorem \ref{t:main_many_shifts} above. 
Bound (\ref{f:k_int}) below is better than (\ref{f:main_many_shifts}) for very large $k$.

\begin{corollary}
	Let $\G$ be a multiplicative sugroup, $|\G| < p^{1/2} \log^{-1/5} p$. 
	Let also $x_1,\dots,x_k \in \F_p$ be any distinct numbers, $k \le |\G|^{\frac{19}{39}} \log^{-\frac{219}{195}} |\G|$. 
	Then
\begin{equation}\label{f:k_int}
	|(\G + x_1) \cap (\G+x_2) \cap \dots \cap (\G+x_k)| 
		\ll
			k^{-2} |\G|^{\frac{19}{13}} \log^{\frac{41}{65}} |\G| \,. 
\end{equation}
\end{corollary}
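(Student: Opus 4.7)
The plan is to apply Proposition \ref{p:main} to the pair $A := I$, $B := X := \{x_1, \dots, x_k\}$, where $I := \bigcap_{j=1}^{k}(\G + x_j)$ and $m := |I|$ is the quantity to be bounded. Dispose first of the trivial case $m \le k$: the desired inequality then reduces to $k^{3} \ll |\G|^{19/13}\log^{41/65}|\G|$, which is implied at once by the hypothesis $k \le |\G|^{19/39}\log^{-219/195}|\G|$. So assume $m > k \ge 2$. Every $y \in I$ satisfies $y - x_j \in \G$ (hence is nonzero) for each $j$, so both
\begin{equation*}
\frac{y - x_j}{y' - x_j} \qquad \text{and} \qquad \frac{x_j - y}{x_{j'} - y} \;=\; \frac{y - x_j}{y - x_{j'}}
\end{equation*}
belong to $\G \cdot \G^{-1} = \G$. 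Therefore condition (\ref{f:p_main}) holds with $\eta_1 = \eta_2 = 1$, $\Omega_1 = \Omega_2 = \emptyset$ and $\omega = 0$, while the size requirement $|B| \le |A| \le \sqrt{p}$ is automatic, since $I$ sits in a single coset of $\G$ and hence $m \le |\G| < \sqrt{p}$.

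Next I would invoke Proposition \ref{p:main} and replace $\E^{+}(\G) + |\G|^{2}$ by its upper bound $\ll |\G|^{32/13}\log^{41/65}|\G|$ from (\ref{f:subgroup_energy}), which is legitimate because $|\G| < p^{1/2}\log^{-1/5}p$. After using $\log m \le \log |\G|$ this gives
\begin{equation*}
m^{4}k^{4}|\G| \;\ll\; |\G|^{32/13}\log^{41/65}|\G|\cdot\bigl((mk)^{5/2}\log^{2}|\G| + m^{3}k^{2}\bigr),
\end{equation*}
and one of the two summands on the right must dominate the left-hand side up to constants.

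If the term $m^{3}k^{2}$ wins, I divide through to obtain $mk^{2} \ll \E^{+}(\G)/|\G| \ll |\G|^{19/13}\log^{41/65}|\G|$, which is already the conclusion sought. If instead the term $(mk)^{5/2}\log^{2}|\G|$ wins, then $(mk)^{3/2} \ll |\G|^{19/13}\log^{171/65}|\G|$, i.e.\ $mk \ll |\G|^{38/39}\log^{114/65}|\G|$; dividing by $k$, this implies $m \ll k^{-2}|\G|^{19/13}\log^{41/65}|\G|$ exactly when $k \le |\G|^{19/13 - 38/39}\log^{41/65 - 114/65}|\G| = |\G|^{19/39}\log^{-219/195}|\G|$, which is precisely the hypothesis on $k$ (using $73/65 = 219/195$).

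The only real subtlety is that the secondary term $|A|^{3}|B|^{2}$ in the incidence estimate (\ref{f:Q(A,B,C,D)}) might conceivably have spoiled the scheme, but here it is benign: it produces the first case, whose bound is already what we want. The slightly odd logarithmic exponent $-219/195$ in the hypothesis on $k$ is tuned precisely so that the second case yields a matching conclusion. Everything else is bookkeeping with the subgroup energy estimate (\ref{f:subgroup_energy}) and the trivial coset bound $m \le |\G|$.
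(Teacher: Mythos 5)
Your proof is correct and follows essentially the same route as the paper: set $A$ equal to the intersection and $B=\{x_1,\dots,x_k\}$, observe $A-B\subseteq\G$ so that Proposition \ref{p:main} applies with $\eta_1=\eta_2=1$ and $\omega=O(1)$, and then combine with the energy bound (\ref{f:subgroup_energy}). The only cosmetic difference is organizational --- the paper assumes the bound fails, uses the resulting lower bound on $|A|$ plus the hypothesis on $k$ to show the term $|A|^3k^2$ dominates, and concludes, whereas you run an explicit two-case analysis (and dispose of the degenerate cases $m\le k$, with $k=1$ being trivial anyway since $m\le|\G|$); the arithmetic is identical in both versions.
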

\begin{proof}
	Put $A= (\G + x_1) \cap (\G + x_2)  \cap \dots \cap (\G + x_k)$ and $B= \{x_1,\dots,x_k\}$.
	If estimate (\ref{f:k_int}) takes place, then there is nothing to prove.  
	Otherwise, we get $|A| \gg k^{-2} |\G|^{\frac{19}{13}} \log^{\frac{41}{65}} |\G|$.
	Further, we have $A-B \subseteq \G$. 
	Using  Proposition \ref{p:main} and our condition on $k$, namely, $k \le |\G|^{\frac{19}{39}} \log^{-\frac{219}{195}} |\G|$, one has 
$$
	|A|^4 k^4 |\G| \ll \E^{+} (\G) ( |A|^{5/2} k^{5/2} \log^2 |\G| + |A|^3 k^2) \ll \E^{+} (\G) |A|^3 k^2 \,.
$$
	Applying Theorem \ref{t:32/13}, we obtain the required result.
$\hfill\Box$
\end{proof}

\section{The proof of Theorem \ref{t:G+x_intr}}
\label{sec:proof2}


It remains to prove our second result, i.e. Theorem \ref{t:G+x_intr}. 
We begin with a lemma which is parallel to the main results of \cite{Sarkozy_residues_shift}, \cite{Shparlinski_AD} 
and we just repeat the proof from \cite{Shparlinski_AD} (although in Theorem \ref{t:A-A_in_G} below we need in  the case $|\G| \le p^{6/7-\eps}$ only). 

\begin{lemma}
	Let $\G\subset \F_p \setminus \{0\}$ be a multiplicative subgroup, $A,B\subseteq \F_p$ be two sets, $|A\setminus\{0 \}|>1$, $|B\setminus \{0\}|>1$.
	Suppose that for some $x\neq 0$ one has $A/B = \G+x$. 
	Then $|A|, |B| \sim |\G|^{1/2+o(1)}$. 
\label{l:0.5_shift}
\end{lemma}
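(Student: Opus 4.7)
The plan is to adapt Shparlinski's intersection argument from \cite{Shparlinski_AD} to the multiplicative setting. The hypothesis $A/B = \G + x$ produces two symmetric families of inclusions. First, for every nonzero $b \in B$,
\[
A \subseteq b(\G + x) = b\G + bx,
\]
placing $A$ inside an additive shift of a multiplicative coset of $\G$. Second, inverting the equation (legitimate since $x \neq 0$ forces $\G+x$ to contain at most one zero, which is harmless), for every nonzero $a \in A$,
\[
1/B \subseteq (1/a)(\G + x).
\]
These two inclusions let us bound $|A|$ and $|B|$ by a symmetric procedure.

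To bound $|A|$, I would apply pigeonhole to the partition of $\F_p\setminus\{0\}$ into cosets of $\G$ and produce a coset $b_0\G$ containing at least $m := \lceil |B\setminus\{0\}|\,|\G|/(p-1)\rceil$ elements of $B\setminus\{0\}$. Writing these as $b_0\gamma_1,\ldots,b_0\gamma_m$ with $\gamma_i \in \G$ distinct and $\gamma_1=1$, the intersections of the corresponding inclusions yield $A/b_0 \subseteq \bigcap_{i=1}^m (\G + \gamma_i x)$, so after translating by $-x$,
\[
|A| \le \Bigl|\G \cap \bigcap_{i=2}^m \bigl(\G + (\gamma_i - 1)x\bigr)\Bigr|.
\]
I would then apply Theorem~\ref{t:main_many_shifts} with $k = m-1$ shifts, choosing $m$ as large as the theorem's side condition $32k\,2^{20k\log(k+1)} \le |\G|$ allows, essentially $m \asymp \log|\G|/\log\log|\G|$, which yields $|A| \le |\G|^{1/2 + o(1)}$.

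The mirror argument applied to $1/B$ and the second family of inclusions, using pigeonhole on the cosets of $\G$ meeting $A\setminus\{0\}$, gives $|B| = |1/B| \le |\G|^{1/2+o(1)}$. The matching lower bounds $|A|, |B| \ge |\G|^{1/2-o(1)}$ follow from the trivial counts $|A|\cdot|B\setminus\{0\}| \ge |A/B| = |\G|$ and $|A\setminus\{0\}|\cdot|B| \ge |B/A| = |\G|$. The principal technical issue will be ensuring that pigeonhole actually delivers $m$ elements in a common coset of $\G$; this requires $|A\setminus\{0\}|, |B\setminus\{0\}| \gg (p-1)\log|\G|/(|\G|\log\log|\G|)$, which can be bootstrapped from the crude bound $|A|, |B| \le |\G|$ (immediate from either inclusion) combined with the product inequalities above, closing the loop after a single iteration once the upper bounds on the larger of $|A|, |B|$ are in hand.
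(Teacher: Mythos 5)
Your pigeonhole step is the fatal gap. You want $m\asymp \log|\G|/\log\log|\G|$ elements of $B\setminus\{0\}$ lying in a \emph{single} coset $b_0\G$, and pigeonhole over the $(p-1)/|\G|$ cosets only guarantees $\lceil |B'|\,|\G|/(p-1)\rceil$ of them. Since a fixed nonzero $a\in A$ gives $a/B'\subseteq \G+x$ and hence $|B'|\le|\G|$, this quantity is at most $|\G|^2/(p-1)$, which is $\le 1$ for every $|\G|\le\sqrt{p}$ — and even in the expected regime $|B'|\sim|\G|^{1/2}$ it is $\le 1$ for all $|\G|\le p^{2/3}$, precisely the range of Theorem \ref{t:G_intr}. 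So for small and medium subgroups the elements of $B'$ may all lie in pairwise distinct cosets and your reduction produces no intersection at all. The bootstrap you propose to rescue this is circular: from $|A|\le|\G|$ and $|A|\,|B'|\ge|\G|$ you only get $|B'|\ge 1$; to force $|B'|$ large you would already need the upper bound on $|A|$ you are trying to prove. The paper sidesteps the whole issue by using the inclusion $A\subseteq\bigcap_j(b_j\xi\G+b_j)$ with the $b_j\xi\G$ being \emph{different} cosets, which is admissible because the final sentence of Theorem \ref{t:main_many_shifts} extends bound (\ref{f:main_many_shifts}) to intersections of shifts of arbitrary cosets of $\G$; no common coset is ever needed.

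Two further ingredients of the paper's proof are absent from your plan and cannot be dispensed with. First, to apply Theorem \ref{t:main_many_shifts} with $k\to\infty$ one must know that $B'$ supplies unboundedly many shifts; since a priori $|B'|$ could be as small as $2$, the paper first uses the sum--product bound of Garaev--Shen/Bourgain on $b_1^{-1}b_2^{-1}(A-b_1)(A-b_2)\subseteq\xi^2\G$ to get $|A|\ll|\G|^{1-\d/2}$, whence $|B'|\ge|\G|/|A|\gg|\G|^{\d/2}\to\infty$. Your outline has no substitute for this step. Second, Theorem \ref{t:main_many_shifts} carries the side condition $p\ge 4k|\G|(|\G|^{1/(2k+1)}+1)$, so the intersection machinery breaks down for $|\G|>p^{1-\eps}$; the paper handles that regime by an entirely separate character-sum argument (Karatsuba's bound plus a Weil-bound computation showing $|B'|\gg\log p$), which your proposal does not address. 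The lower bounds via $|A|\,|B'|\ge|\G|$ at the end are fine once the upper bounds are in place, but as it stands the upper bounds are not established for any $|\G|\le\sqrt{p}$, let alone the full range.
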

\begin{proof}
	Dividing by $x$ and redefining $A$, we have $A/B=\xi \G +1$, where $\xi = 1/x$. 
	In other words, for any $a\in A$, $b\in B$, $b\neq 0$ one has 
\begin{equation}\label{tmp:20.01.2017_1}
	\frac{a}{b}-1 = \frac{a-b}{b} \in \xi \G \,.
\end{equation}
	One can assume that $1\in B$ and hence $A\subseteq \xi \G +1$. 
	Put $A' = A\setminus \{0\}$, $B' = B\setminus \{0\}$, 
	and assume, in addition, 	 that $|A|\ge |B|$. 
	In view of (\ref{tmp:20.01.2017_1}) for any
	$b_1, \dots, b_k \in B'$ 
	one has 
	\begin{equation}\label{tmp:17.01.2017_1}
		A \subseteq (b_1 \xi \G + b_1) \cap (b_2 \xi \G + b_2)  \cap \dots \cap (b_k \xi \G + b_k) \,.  
	\end{equation}
	Since $|B'| > 1$, it follows that for some different $b_1,b_2 \in B'$, we get $b^{-1}_1 A - 1, b^{-1}_2 A-1 \subseteq \xi \G$ and hence
	$b_1^{-1} b_2^{-1} (A-b_1) (A-b_2) \subseteq \xi^2 \G$. 
	Fix $\eps>0$ and suppose, firstly, that $|\G| \le p^{1-\eps}$. 
	By a sum--product result from \cite{GS}, \cite{Bourgain_more}, we see $|(A-b_1) (A-b_2)| \gg |A|^{1+\d}$, where $\d = \d (\eps)>0$.
	It immediately implies  
	$|A| \ll |\G|^{1-\d/2}$. 
	Thus,  because, trivially, $|A||B| \ge |\G|$, we obtain $|B| \gg |\G|^{\d/2}$ and  we can use inclusion  (\ref{tmp:17.01.2017_1}) and Theorem \ref{t:main_many_shifts} 
	with $k=|\G|^{\d/2}$ 
	to reach, finally, $|B| \le |A| \ll |\G|^{1/2+o(1)}$. 
	Again, $|A| |B| \ge |\G|$ and hence $|A| \sim |B| \sim |\G|^{1/2+o(1)}$.

	Now let $|\G| > p^{1-\eps}$. 
	Let  $\mathcal{X}$ be the set of all multiplicative characters $\chi$ such that $\chi^d = \chi_0$, where $\chi_0$ is the principal character and $d=(p-1)/|\G| < p^{\eps}$.
	Also, put  $\mathcal{X}^* =  \mathcal{X}\setminus \{0\}$.
	If $|A| \ge |B| > p^{\d}$ (actually, we have $|A| \ge |\G|^{1/2}$ automatically,  since $|A|\ge |B|$ and $|A| |B| \ge |\G|$)
	for some positive $\d$, then taking any $\chi \in \mathcal{X}^*$, 
	we obtain 
	$|\sum_{a\in A'} \sum_{b\in B'} \chi (a-b) \overline{\chi(b)} | = |A'| |B'|$.
	Combining this 
	with  the Karatsuba bound \cite{Karatsuba_chi}
	$$
		\sum_{a\in A} \sum_{b\in B} \alpha(a) \beta(b) \chi(a+b) \ll |A|^{1-1/2\nu} \left( |B|^{1/2} p^{1/2\nu} + |B| p^{1/4\nu} \right)  
	$$
	which takes place for any positive integer $\nu$ and any sequences $\alpha$, $\beta$ such that $\|\alpha \|_\infty \le 1$, $\|\beta \|_\infty \le 1$,  
	we obtain
	$$
		|A| |B| \ll |A|^{1-1/2\nu} \left( |B|^{1/2} p^{1/2\nu} + |B| p^{1/4\nu} \right) \ll |A|^{1-1/2\nu}  |B| p^{1/4\nu} \,.
	$$
	Here we have taken $\nu = \lceil 1/\d \rceil$ and have used $|A| \ge |B| > p^{\d}$.  
	Thus, $|A| \ll \sqrt{p} \ll |\G|^{1/2+o(1)}$ 
	as required. 

	To insure that $|A| \ge |B| > p^{\d}$ we just notice that for any $u\in \xi \G+1$ there is $b\in B'$ such that $ub\in A\subseteq \xi \G+1$.
	In other words, $\xi^{-1} (ub-1) \in \G$.
	We have $\G(x) = \frac{1}{d} \sum_{\chi \in \mathcal{X}} \chi(x)$ and hence by 
	the last inclusion,  
	we get for $\{ b_1,\dots, b_k \} = B' \setminus \{1 \}$ that 
$$
	0 = \sum_{u\in \xi \G +1}\, \prod_{j=1}^k  (1-(\xi \G+1) (u b_j)) =  \sum_{u\in \xi \G +1}\, \prod_{j=1}^k \left( 1- \frac{1}{d} \sum_{\chi \in \mathcal{X}} \chi(ub_j-1) \overline{\chi(\xi)} \right) 
	=
$$
$$
	=
		 \sum_{x\in \F_p \setminus \{0\}}\, \prod_{j=1}^k \left( 1- \frac{1}{d} \sum_{\chi \in \mathcal{X}} \chi((1+\xi x^d)b_j-1) \overline{\chi(\xi)} \right) 
	=
	(p-1) (1-d^{-1})^k  + R \,,
$$
	where the first term in the last formula is just a contribution of the principal character, 
	and the error term $R$ absorbs the rest.
	It is easy to check (or see \cite{Shparlinski_AD})  that  the Weil bound implies $|R| \ll k2^k d  (1-d^{-1})^k \sqrt{p}$. 
	Whence  
	we obtain $|B'| = k \gg \log (\sqrt{p}/d) \gg \log p$.
	In view of \cite{Sarkozy_residues_shift} we can assume that $d\ge 3$.
	For $d\ge 3$ choose $k_* = \lceil \log (\sqrt{p}/d) /\log d \rceil \le k$ elements from $B$. 
	Using 
	inclusion 
	(\ref{tmp:17.01.2017_1}), the choice of the parameter $k_*$ and the previous arguments, we have 
	$$
		|A| \ll p/d^{k_*} + k_* \sqrt{p} \ll dp^{1/2} \log p \cdot (\log d)^{-1} \le p^{1/2+o(1)} \le |\G|^{1/2+o(1)} 
	$$
	 (see \cite{Shparlinski_AD}).
	Whence $|B| \ge |\G|/|A| \ge |\G|^{1/2+o(1)}$ as required.    
$\hfill\Box$
\end{proof}

	\begin{remark}
		Let $A=\{0,1\}$, $B=(\G-1) \setminus \{0\}$ or $A=(\G-1) \setminus \{0\}$, $B=\{0,1\}$. 
		Then it is easy to check that $AB=\G-1$. 
		Thus, we need $|A\setminus \{ 0 \}|, |B\setminus \{ 0 \}| > 1$ in general. 
	\end{remark}


Suppose that $A/B =\G+x$. As in the beginning of the proof of Lemma \ref{l:0.5_shift}
we dividing the last identity by $x$ and redefine $A$  such that 
 $A/B=\xi \G +1$, where $\xi = 1/x$. 
	In other words, for any $a\in A$, $b\in B \setminus \{0\}$ one has $\frac{a}{b}-1 = \frac{a-b}{b} \in \xi \G$.
	Whence for all $b,b'\neq 0$ and  $a\neq b,b'$ the following holds 
	\begin{equation}\label{f:T_shift_1}
		\frac{1/b-1/a}{1/b'-1/a} = \frac{(a-b)b'}{(a-b')b} =   \frac{a-b}{b} :  \frac{a-b'}{b'}  \in \xi \G/\xi \G = \G \,.
	\end{equation}
	Similarly, if  $b\neq 0$  and  $a,a'\neq b$,  then 
	\begin{equation}\label{f:T_shift_2}
		\frac{a-b}{a'-b}  =  \frac{a-b}{b} :  \frac{a'-b}{b} \in \xi \G/\xi \G = \G \,.
	\end{equation}
	In other words,  $\T [A^{-1},B^{-1}, B^{-1}], \T [B,A,A] \subseteq \G \bigsqcup \{ 0 \}$ 
	and using  Proposition \ref{p:main} (with $A=B=A$ or $A=B=A^{-1}$), as well as the proof of Corollary \ref{cor:main} in the symmetric case $A=B$, we obtain that $A/A \neq \xi \G+1$ for $|\G| \le p^{2/3-\eps}$.
	Actually, in the case $A=B$ a stronger result takes place (it is parallel to Theorem 36 from \cite{MPR-NRS}).

\begin{theorem}
    Let $\Gamma \subset \F_p$ be a multiplicative subgroup, and $\xi \neq 0$ be an arbitrary residue.
Suppose that for some $A\subset \F_p$
one has
\begin{equation}\label{cond:A-A_new}
    A/A \subseteq \xi \Gamma +1 \,.
\end{equation}
    If $|\Gamma| < p^{3/4}$, then $|A| \ll |\Gamma|^{5/12} \log^{7/6} |\Gamma|$.
    If $p^{3/4} \le |\Gamma| \le p^{5/6}$, then 
    $
    	|A| \ll p^{-5/8} |\Gamma|^{5/4} \log^{7/6} |\Gamma|
    $.
    If $|\Gamma| \ge p^{5/6}$, then 
	$
    	|A| \ll p^{-1} |\Gamma|^{5/3} \log^{1/3} |\Gamma|
    $.\\
    In particular, for any $\varepsilon>0$ and sufficiently large $\Gamma$, $|\Gamma| \le p^{6/7-\varepsilon}$ the following holds
    $$
        A/A \neq \xi \Gamma + 1 
	\,.
    $$
\label{t:A-A_in_G}
\end{theorem}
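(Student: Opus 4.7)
The plan is to exploit the rigidity of $A$ coming from the hypothesis $A/A\subseteq \xi\G+1$. Applying identities (\ref{f:T_shift_1})--(\ref{f:T_shift_2}) in the diagonal case $B=A$ shows that $\T[A]$ and $\T[A^{-1}]$ both lie in $\G\cup\{0\}$; moreover, the basic set identity (\ref{f:basic_identity}) says that $\T[A]$ is invariant under $x\mapsto 1-x$, so
\[
	\T[A]\;\subseteq\;\(\G\cap(1-\G)\)\cup\{0,1\}\,.
\]
This is the crucial strengthening: the support of $t_{A,A,A}$ is controlled not merely by $|\G|$ but by the much smaller quantity $|\G\cap(1-\G)|$. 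To estimate the latter I would invoke Theorem \ref{t:main_many_shifts} with $k=1$ (and, if necessary, applied to cosets of $\G$): for $|\G|<p^{3/4}$ the explicit bound (\ref{f:main_many_shifts}) gives $|\G\cap(1-\G)|\ll |\G|^{2/3}$, and for $|\G|\ge p^{3/4}$ the asymptotic formula (\ref{f:C_for_subgroups}) gives $|\G\cap(1-\G)|=\Theta(|\G|^2/p)$.

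With these bounds on $|\T[A]|$ in hand, the three cases of the theorem come out of Theorem \ref{R[A]} and Theorem \ref{t:Q}. For $|\G|\le p^{5/6}$ I would use the third lower bound from Theorem \ref{R[A]}, namely $|\T[A]|\gg \min\{p^{2/3},|A|^{8/5}/\log^{28/15}|A|\}$, in the regime where the $|A|^{8/5}$ side of the minimum is attained: comparing with the shift intersection bound yields $|A|\ll |\G\cap(1-\G)|^{5/8}\log^{7/6}|A|$, which in the two subranges $|\G|<p^{3/4}$ and $p^{3/4}\le|\G|\le p^{5/6}$ reads $|A|\ll |\G|^{5/12}\log^{7/6}|\G|$ and $|A|\ll p^{-5/8}|\G|^{5/4}\log^{7/6}|\G|$ respectively, as stated. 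For $|\G|\ge p^{5/6}$ the above bullet of Theorem \ref{R[A]} saturates on the $p^{2/3}$ side and yields no useful bound on $|A|$; here one must instead pass to the quadruple count $\Q(A)$. Since the support of $q_{A,A,A,A}$ is contained in $\T[A]\times\T[A]$, Cauchy--Schwarz produces
\[
	\Q(A)\;\ge\;\frac{|A|^8}{|\T[A]|^2}\;\gg\;\frac{|A|^8 p^2}{|\G|^4}\,,
\]
while Theorem \ref{t:Q} gives $\Q(A)\ll |A|^8/p^2+|A|^5\log|A|$. Excluding the $|A|^8/p^2$ branch of the upper bound (which would force $|\G|\gg p$) and optimising carefully along the lines of Theorem 36 of \cite{MPR-NRS}, one extracts the remaining bound $|A|\ll p^{-1}|\G|^{5/3}\log^{1/3}|\G|$.

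To conclude that $A/A\ne \xi\G+1$ whenever $|\G|\le p^{6/7-\eps}$, one combines these three upper bounds with the trivial lower bound $|A|\ge|A/A|^{1/2}=|\G|^{1/2}$ (which comes from $|A|^2\ge|A/A|=|\G|$). The thresholds $|\G|=p^{3/4},\ p^{5/6},\ p^{6/7}$ are precisely the breakpoints at which consecutive upper bounds pass through $|\G|^{1/2}$, so for $|\G|\le p^{6/7-\eps}$ the relevant upper bound is strictly below $|\G|^{1/2}$ and we reach a contradiction. The main technical obstacle is the last case: the straightforward combination of $|\mathrm{supp}\,q_{A,A,A,A}|\le|\T[A]|^2$ with $\Q(A)\ll|A|^5\log|A|$ only yields $|A|\ll|\G|^{4/3}p^{-2/3}\log^{1/3}|\G|$, which pushes the argument out only to $|\G|<p^{4/5}$. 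Squeezing the exponent down to the sharp $|\G|^{5/3}p^{-1}$ needed to reach the threshold $p^{6/7}$ requires a more refined estimate on the support of $q_{A,A,A,A}$, and this is the crux of the transplantation from \cite{MPR-NRS}.
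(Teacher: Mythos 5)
Your handling of the first two ranges coincides with the paper's argument: from (\ref{f:T_shift_2}) you get $R:=\T[A]\subseteq \G_*:=\G\sqcup\{0\}$, the symmetry (\ref{f:basic_identity}) forces $R\subseteq \G_*\cap(1-\G_*)$, Theorem \ref{t:main_many_shifts} bounds that intersection by $|\G|^{2/3}$ (for $|\G|<p^{3/4}$, via (\ref{f:main_many_shifts}) with $k=1$) and by $O(|\G|^2/p)$ (for $p^{3/4}\le|\G|\le p^{5/6}$, via (\ref{f:C_for_subgroups})), and the third bullet of Theorem \ref{R[A]} then yields $|A|\ll|\G\cap(1-\G)|^{5/8}\log^{7/6}|A|$; the exponents $5/12$, $5/4$ and the log power $7/6$ check out. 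The concluding contradiction against $|A|\ge|\G|^{1/2}$ is also fine (the paper quotes Lemma \ref{l:0.5_shift}, but only the trivial direction is needed).

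The gap is the one you flagged yourself, and it is real: for $|\G|\ge p^{5/6}$ the bound $|\mathrm{supp}\,q_{A,A,A,A}|\le|\T[A]|^2\ll|\G|^4/p^2$ only reaches $p^{4/5}$, and you do not supply the refinement, so the stated theorem is not proved by your write-up. The missing observation is short: if $q(x,y)>0$, write $x=\frac{b-a}{c-a}$, $y=\frac{d-a}{c-a}$ with $a,b,c,d\in A$; then besides $x,1-x,y,1-y\in\G_*$ one also has $1-\frac{y}{x}=\frac{b-d}{b-a}\in\T[A]\subseteq\G_*$ (it is again a ratio of two differences sharing the base point $b$, as in (\ref{f:T_shift_2})), whence
\begin{equation*}
x-y \,=\, x\cdot\Big(1-\frac{y}{x}\Big)\in \G_*\cdot\G_* = \G_*\,.
\end{equation*}
Thus $\mathrm{supp}\,q\subseteq\{(x,y):x,1-x,y,1-y,x-y\in\G_*\}$, and for each admissible $x$ the set of admissible $y$ is a triple intersection of shifts of $\G_*$, which by (\ref{f:C_for_subgroups}) with $k=2$ has size $\ll|\G|^3/p^2$ once $|\G|\ge p^{5/6}$ (precisely the range where the main term beats the $\sqrt p$ error). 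Summing over the $\ll|\G|^2/p$ admissible $x$ gives $|\mathrm{supp}\,q|\ll|\G|^5/p^3$ rather than $|\G|^4/p^2$, and then $|A|^8\le\Q(A)\cdot|\mathrm{supp}\,q|\ll|A|^5\log|A|\cdot|\G|^5/p^3$ delivers $|A|\ll p^{-1}|\G|^{5/3}\log^{1/3}|\G|$ and the threshold $p^{6/7}$. This single extra constraint on the support is the entire content of the third case; without it your argument stops short of the claimed range.
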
 
\begin{proof}
    We can assume that $|A \setminus \{ 0 \}|>1$.
Put $\Gamma_* = \Gamma \sqcup \{ 0 \}$ and $R=\T[A]$.
Then in the light of 
	(\ref{f:T_shift_2}), we have
\begin{equation}\label{f:R_inclusion}
    R\subseteq (\xi \Gamma \sqcup \{ 0 \} ) / \xi \Gamma = \Gamma_* \,.
\end{equation}
Suppose that $|\Gamma| <  p^{ 3/4}$. 
Applying (\ref{f:basic_identity}) and formula (\ref{f:main_many_shifts}) with $k=1$, we obtain
$$
    |R| = |R \cap(1-R)| \le |\Gamma_* \cap (1-\Gamma_*)| \le  |\Gamma \cap (1-\Gamma)| + 2 	\ll |\Gamma|^{2/3} \le p^{2/3} \,.
$$
Thus, by the third part of Theorem \ref{R[A]}, we get $|R| \gg |A|^{8/5}/\log^{28/15} |A|$.
Hence
$$
	\frac{|A|^{8/5}}{\log^{28/15} |A|} \ll |\Gamma|^{2/3}
$$
and we obtain the required result.

Now let us suppose that $|\Gamma| \ge p^{3/4}$ but $|\Gamma| \le p^{5/6}$.
In this case, 
by formula (\ref{f:C_for_subgroups}) and the previous calculations, we get
$$
	|R| \ll |\Gamma|^2 / p \le p^{2/3} \,.
$$
Applying the third part of Theorem \ref{R[A]} one more time, we obtain 
$$
	\frac{|A|^{8/5}}{\log^{28/15} |A|} \ll |\Gamma|^2 / p
$$
and it gives the required bound for size of $A$, namely,
\begin{equation}\label{tmp:23.11.2016_1}
	 |A| \ll p^{-5/8} |\Gamma|^{5/4} \log^{7/6} |\Gamma| \,.
\end{equation}

Now suppose that $|\Gamma|\ge p^{5/6}$.
Put $q(x,y)=q_{A,A,A,A} (x,y)$.  
We use the second formula from (\ref{f:def_t,q}) and note that if $q(x,y)>0$, then $x-y \in \Gamma_*$.
By inclusion (\ref{f:R_inclusion}), Theorem \ref{t:Q} and the Cauchy--Schwarz inequality, we obtain
$$
    	|A|^8 
	\ll 
		\left( \sum_{x,y} q(x,y) \right)^2
			\le
			 	 \sum_{x,y} q^2 (x,y) \cdot |\mathrm{supp}\, q| 
		\ll
$$
$$ 
		\ll
        	\left( \frac{|A|^8}{p^2} + |A|^5 \log |A| \right)
            	\cdot
    		\left( \sum_{x,y} R (x) R(y) \Gamma_* (x-y) \right)
            	\ll
$$
$$
	\ll
    	\left( \frac{|A|^8}{p^2} + |A|^5 \log |A| \right)
            	\cdot
	\left( \sum_{x} \Gamma_* (x) \Gamma_* (1-x) \sum_y \Gamma_*(y) \Gamma_* (1-y) \Gamma_* (x-y) \right) \,.
$$
It is easy to see that the summand with $x=1$ is negligible in the last inequality.  
Using formula (\ref{f:C_for_subgroups}) with $k=2$ and the condition $|\Gamma|\ge p^{5/6}$, we get
$$
	|A|^8 
    	\ll
        	\left( \frac{|A|^8}{p^2} + |A|^5 \log |A| \right) \cdot \frac{|\Gamma|^5}{p^3} 
            	\ll
                	|A|^5 \log |A| \cdot \frac{|\Gamma|^5}{p^3} \,.
$$
It follows that 
\begin{equation}\label{tmp:23.11.2016_1'}
	|A| \ll p^{-1} |\Gamma|^{5/3} \log^{1/3} |\Gamma| \,.
\end{equation}

Finally, by Lemma \ref{l:0.5_shift}, we get $|A| \sim |\Gamma|^{1/2+o(1)}$, provided $|A\setminus \{0\}|>1$.
Thus, if $|\Gamma| \le p^{6/7-\varepsilon}$, then in view of (\ref{tmp:23.11.2016_1'}) and two another bounds for size of $A$, we have 
 $A/A \neq	\xi \Gamma + 1$ 
 for sufficiently large $\Gamma$.
If $|A\setminus \{0\}|\le 1$, then, clearly, $|\G| \le 2$ and this is a contradiction with the assumption $|\G| \gg 1$. 
$\hfill\Box$
\end{proof}



	\bigskip 

	Because our 
	approach 
	requires just an incidence bound from \cite{MPR-NRS}, \cite{Petridis_quadruples} and  Theorem \ref{t:32/13} which are both have place in $\R$ (see details in \cite{SZ}), we obtain an analog of Theorem \ref{t:SZ} 
	as  well as  Theorem \ref{t:A-A_in_G} in the real setting.

\begin{theorem}
	There is $\eps>0$ such that for all sufficiently large finite $A\subset \R$ with $|AA| \le |A|^{1+\eps}$ 
	there is no decomposition $A+1=B/B$ with $|B\setminus \{0\}| >1$.\\
	In a similar way, let $B \subset \R$ be a set such that  $|B \setminus \{0\}| >1$.
	Then 
	the following holds  
	$$
		|(B/B - 1)(B/B - 1)| \gg |B/B|^{1+c} \,,
	$$
	where $c>0$ is an absolute constant. 
\label{t:SZ'}
\end{theorem}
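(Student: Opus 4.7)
The two assertions are contrapositives of each other via the substitution $A = B/B - 1$, so we focus on the first. The plan is to transplant the argument of Theorem \ref{t:A-A_in_G} into $\R$ using the real analogs of its two main ingredients: the collinear-quadruple bound $\Q(A) \ll |A|^5 \log |A|$ of \cite{Petridis_quadruples, MPR-NRS} (which holds verbatim in $\R$) and a Konyagin--Shkredov-type sum--product energy bound $\E^+(X) \ll |X|^{5/2 - \delta(\eps)}$, $\delta(\eps)>0$, valid for $X \subset \R$ with $|XX| \le |X|^{1+\eps}$. Both were used in \cite{SZ} to prove Theorem \ref{t:SZ}, so the same machinery is available here.

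Suppose for contradiction that $A + 1 = B/B$ with $|AA| \le |A|^{1+\eps}$ and $|B \setminus \{0\}| > 1$. After discarding $0$ from $B$, we may assume $0 \notin B$. For any $b_1, b_2, b_3 \in B$ with $b_1, b_2 \ne b_3$,
\[
\frac{b_1 - b_3}{b_2 - b_3} = \frac{b_1/b_3 - 1}{b_2/b_3 - 1} \in \frac{A}{A},
\]
because $b_i/b_3 \in B/B = A + 1$ forces $b_i/b_3 - 1 \in A$. Hence $\T[B] \subseteq (A/A) \cup \{0\}$. By multiplicative Pl\"unnecke--Ruzsa, $|(A/A)^k| \le |A|^{1+O_k(\eps)}$ for each fixed $k$, so the set $\Gamma := A/A$ has small multiplicative doubling, and in particular $\E^+(\Gamma) \ll |\Gamma|^{5/2 - \delta(\eps)}$.

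Apply the real version of Proposition \ref{p:main} in the symmetric case where both sets equal $B$, with $\Gamma := A/A$ and $\omega = 1$. The only place where subgroup structure of $\Gamma$ entered the original proof was the bound $\sigma' \le |\Gamma|^{-1} \E^+(\Gamma)$; in $\R$ one has $\gamma \gamma_2 \in \Gamma \Gamma$ rather than $\gamma \gamma_2 \in \Gamma$, but since $|\Gamma\Gamma| \le |\Gamma|^{1 + O(\eps)}$, the analogous bound $\sigma' \ll |\Gamma|^{-1+O(\eps)} \E^+(\Gamma)$ still holds. The conclusion of the proposition becomes
\[
|B|^3 \ll |\Gamma|^{3/2 - \delta(\eps)} \cdot |A|^{O(\eps)} \log^{O(1)}|B|.
\]
Using $|\Gamma| \le |A|^{1+O(\eps)}$ together with the trivial $|A| = |B/B| \le |B|^2$ gives $|B|^3 \ll |B|^{3 - 2\delta(\eps) + O(\eps)} \log^{O(1)}|B|$, a contradiction for $|B|$ large once $\eps$ is chosen so small that $O(\eps) < 2\delta(\eps)$. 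This establishes the first part, and hence the second part by the reduction above.

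The main technical obstacle is verifying the real version of $\sigma' \ll |\Gamma|^{-1+O(\eps)} \E^+(\Gamma)$: in the finite-field case one exploits $\Gamma \cdot \Gamma = \Gamma$ directly, while in $\R$ one must track the multiplicative doubling of $A/A$ through each step of the counting via Pl\"unnecke--Ruzsa. Since the argument of Proposition \ref{p:main} requires only a constant number of such multiplications, the cumulative loss is polynomial in $|A|^\eps$ and is absorbed into the final inequality, so long as the gain $\delta(\eps)$ from the real sum-product energy bound dominates the aggregate $\eps$-loss---which it does for sufficiently small $\eps>0$.
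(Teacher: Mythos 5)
Your overall plan---transplant the finite-field argument to $\R$ using the real collinear-quadruple bound and the real analogue of Theorem \ref{t:32/13}---is the route the paper gestures at, and your reduction $\T[B]\subseteq (A/A)\cup\{0\}$ is correct. But the step you yourself flag as ``the main technical obstacle'' is a genuine gap, and your proposed fix does not repair it. In the proof of Proposition \ref{p:main} the bound $\sigma'\le|\G|^{-1}\E^{+}(\G)$ uses \emph{two} subgroup properties: (a) $\gamma\gamma_2\in\G$, and (b) dilation invariance, i.e.\ the representation count $r(x)=|\{x=\eta_2\gamma-\eta_1\eta_2\gamma_2'+\eta_1\gamma_1\}|$ satisfies $r(\lambda)=r(1)$ for every $\lambda\in\G$, which is what converts the \emph{pointwise} count $r(1)$ into the \emph{averaged} quantity $|\G|^{-1}\sum_{\lambda\in\G}r(\lambda)=|\G|^{-1}\E^{+}(\cdots)$. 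Your Pl\"unnecke--Ruzsa remark only addresses (a). Property (b) fails outright for $\G=A/A$: for a set that is merely multiplicatively structured, the number of representations of the particular element $1$ can exceed the average by a power of $|\G|$. The best generic substitute, Cauchy--Schwarz over $\mu\in\G\G$, gives only $\sigma'\ll|\G\G|^{1/2}\E^{+}(\G)^{1/2}\ll|\G|^{7/4-\delta/2+O(\eps)}$; since here $|\G|=|A/A|$ with $|A|=|B/B|$, which is expected to be of order $|B|^{2}$ (compare Lemma \ref{l:0.5_shift}), this is roughly $|B|^{7/2}$ and cannot contradict the lower bound $\sigma\gg|B|^{3}/\log^{2}|B|$ coming from (\ref{tmp:14.01.2017_1}). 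So the inequality $\sigma'\ll|\G|^{-1+O(\eps)}\E^{+}(\G)$ is not established, and the cumulative loss is a power of $|\G|$, not a power of $|A|^{\eps}$.

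The route that actually transfers to $\R$ is the one the paper uses for Theorem \ref{t:A-A_in_G}, not Proposition \ref{p:main}: set $R=\T[B]\subseteq\G_*=(A/A)\cup\{0\}$ and use the identity (\ref{f:basic_identity}), $R=1-R$, to get $|R|\le|\G_*\cap(1-\G_*)|$. Over $\R$ both needed ingredients are available without any dilation invariance: Szemer\'edi--Trotter gives $\T(B)\ll|B|^4\log|B|$, hence $|\T[B]|\gg|B|^{2}/\log|B|$ (much stronger than the third bullet of Theorem \ref{R[A]}); and for $S=\G\cap(1-\G)$ one has $S(1-S)\subseteq\G\G$, so $|S|^{1+c_0}\ll|S(1-S)|\le|\G\G|\ll|A|^{O(\eps)}|B|^{2}$ by a real sum--product bound of Garaev--Shen type, forcing $|S|\ll|B|^{2-c_1+O(\eps)}$ and a contradiction for small $\eps$. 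I would rewrite your argument along these lines rather than trying to salvage the energy step.
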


\bigskip

\noindent{I.D.~Shkredov\\
Steklov Mathematical Institute,\\
ul. Gubkina, 8, Moscow, Russia, 119991}
\\
and
\\
IITP RAS,  \\
Bolshoy Karetny per. 19, Moscow, Russia, 127994\\
and 
\\
MIPT, \\ 
Institutskii per. 9, Dolgoprudnii, Russia, 141701\\
{\tt ilya.shkredov@gmail.com}

\end{document}